\newcommand{\op}[1]{\prescript{o}{}{#1}}
\newcommand{\pp}[1]{\prescript{p}{}{#1}}
\newcommand{\midb}{\;\middle|\;}
\newcommand{\one}{\mathbbm 1}
\def\reals{\mathbb{R}}
\def\uball{\mathbb{B}}
\def\ereals{\overline{\mathbb{R}}}
\def\interior{\mathop{\rm int}\nolimits}
\def\comp{\raise 1pt \hbox{$\scriptstyle\circ$}}
\def\esssup{\mathop{\rm ess\ sup}\nolimits}
\def\dom{\mathop{\rm dom}\nolimits}
\def\lev{\mathop{\rm lev}\nolimits}
\def\Var{\mathop{\rm Var}\nolimits}
\def\upto{{\raise 1pt \hbox{$\scriptstyle \,\nearrow\,$}}}
\def\downto{{\raise 1pt \hbox{$\scriptstyle \,\searrow\,$}}}
\def\inte{\mathop{\rm int}}
\def\cl{\mathop{\rm cl}\nolimits}
\def\epi{\mathop{\rm epi}}
\def\tos{\rightrightarrows}
\def\FF{(\F_t)_{t\ge 0}}
\def\A{{\cal A}}
\def\B{{\cal B}}
\def\D{{\cal D}}
\def\F{{\cal F}}
\def\LL{{\mathbb L}}
\def\M{{\cal M}}
\def\N{{\cal N}}
\def\O{{\cal O}}
\def\R{{\cal ﬂR}}
\def\T{{\cal T}}
\def\V{{\cal V}}
\def\X{{\cal X}}
\newtheorem{theorem}{Theorem}
\newtheorem{lemma}[theorem]{Lemma}
\newtheorem{corollary}[theorem]{Corollary}
\newtheorem{remark}{Remark}
\theoremstyle{definition}
\theoremstyle{empty}
\begin{document}
\title{Convex integral functionals of processes of bounded variation}
\author{Teemu Pennanen\thanks{Department of Mathematics, King's College London,
Strand, London, WC2R 2LS, United Kingdom} \and 
Ari-Pekka Perkki\"o\thanks{Department of Mathematics, Technische Universit\"at Berlin, Building MA,
Str. des 17. Juni 136, 10623 Berlin, Germany. The author is grateful to the Einstein Foundation for the financial support.}}

\maketitle

\begin{abstract}
This article characterizes conjugates and subdifferentials of convex integral functionals over the linear space $\N^\infty$ of stochastic processes of essentially bounded variation (BV) when $\N^\infty$ is identified with the Banach dual of the space of regular processes. Our proofs are based on new results on the interchange of integration and minimization of integral functionals over BV processes. Under mild conditions, the domain of the conjugate is shown to be contained in the space of semimartingales which leads to several applications in the duality theory in stochastic control and mathematical finance.
\end{abstract}

\noindent\textbf{Keywords.} stochastic process, bounded variation; integral functional; convex duality
\newline
\newline
\noindent\textbf{AMS subject classification codes.} 46N10, 60G07

\section{Introduction}

This article studies convex integral functionals of the form
\[
Ef(x) = E\left[\int_{[0,T]}h_t(x_t)d\mu_t +k_0(x_0)+k_T(x_{T+})\right]
\]
defined on the linear space $\N^\infty$ of adapted left continuous processes of essentially bounded variation in a filtered probability space $(\Omega,\F,\FF,P)$. Here $\mu$ is a positive atomless optional random measure on $[0,T]$ and $h$ is a convex normal integrand on $\Omega\times[0,T]\times\reals^d$, and $k_0$ and $k_T$ are convex normal integrands on $\Omega\times\reals^d$. The main result of this paper gives an explicit expression for the conjugate of $Ef$ when the space $\N^\infty$ is identified with the Banach dual of {\em regular processes}, the optional projections of continuous processes with integrable supremum norm.

Under fairly general conditions, the domain of the conjugate of $Ef$ is contained in the space of regular quasimartingales which are semimartingales given by optional projections of continuous processes of integrable variation. This opens up the possibility of treating various problems in stochastic optimal control and mathematical finance with the theory of convex duality and integral functionals. Integral functionals of processes of bounded variation arise, for example, in problems of optimal investment under transaction costs and portfolio constraints. Such problems involve integral functionals of both the investment strategy as well as its derivative, which in general is a random measure. The basic theory of convex integral functionals of random measures have been developed in a companion paper \cite{pp15a}. Combining this with the main result of the present paper, allows for a unified treatment not only of optimal investment problems but also of more general problems in singular stochastic control much like \cite{bis73b} unified convex stochastic control problems without singularities. Singular stochastic control will be treated in a followup paper.

Our proofs are based on a version of the ``interchange rule'' that allows for reversing the order of minimization and integration in the optimization of integral functionals. In the case of decomposable spaces of measurable functions, such results go back to the works of \cite{roc68} and \cite{val75}. \cite{roc71} treated the nondecomposable space of continuous functions by embedding it in the space of essentially bounded measurable functions. \cite[Theorem~1]{bv88} gives a general interchange rule on spaces that are stable under continuous partitions of unity. This approach was used in \cite{per14} to study integral functionals of BV functions. In this paper we extend these arguments to the stochastic setting by combining them with interchange rules for Suslin space-valued functions from \cite{val75}.

The rest of this paper is organized as follows. Section~\ref{sec:det} studies integral functionals in the deterministic setting over the space $X$ of left-continuous functions of bounded variation. The section is split in two subsections, the first one giving an interchange rule for minimization and integration and the second on conjugates and subdifferentials when $X$ is regarded as the Banach dual of the space of continuous functions. Our main results are given in Section~\ref{sec:stoch} which follows a similar structure in the study of integral functionals over $\N^\infty$.

\section{Integral functionals of BV functions}\label{sec:det}

Given  a positive Radon measure $\mu$ on $[0,T]$ and $h$ a convex normal integrand on $\reals^d$ (see Appendix~\ref{app:if}), the associated integral functional on the space of measurable $\reals^d$-valued functions is defined by
\[
I_h(x):=\int h(x)d\mu:=\int_{[0,T]}h_t(x_t)d\mu_t.
\]
This section studies $I_h$ on the space $X$ of left-continuous functions $x:\reals\to\reals^d$ of bounded variation such that $x$ is constant outside of a fixed time interval $[0,T]$. Throughout this section, we assume that $\mu$ is atomless and has support $[0,T]$. Section~\ref{ssec:icrd} gives sufficient conditions for the interchange of the order of integration and minimization over $X$. Section~\ref{ssec:dualityd} uses the interchange rule to give an explicit expression for the conjugate of $I_h$ with respect to the pairing of $X$ with the space of $\reals^d$-valued continuous functions.

\subsection{Interchange rule}\label{ssec:icrd}

Recall that in a decomposable space $\X$ of measurable functions on $[0,T]$, one has the interchange rule
\[
\inf_{x\in\X} \int h(x)d\mu = \int\inf_{x\in\reals^d}h(x)d\mu
\]
as soon as the infimum on the left is finite; see \cite[Theorem~14.60]{rw98}. For this to hold when, instead of a decomposable space, one minimizes over $X$, we will need to control the behavior of the set
\[
\dom h_t=\{x\in\reals^d\mid h_t(x)<\infty\}
\]
as a function of $t$. Recall that a function is left-continuous in the usual sense if and only if it is continuous with respect to the topology $\tau$ generated by the left-open intervals $\{(s, t] \mid s<t\}$. Accordingly, a set-valued mapping $S:[0,T]\tos\reals^d$ is said to be {\em left-inner semicontinuous} (left-isc) if  $\{t \mid S_t\cap A\neq\emptyset\}$ is $\tau$-open for any open $A\subseteq\reals^d$. 

The following theorem is basically a reformulation of \cite[Theorem~4]{per14}. Given a measurable set-valued mapping $S$, we will use the notation
\[
L^\infty(S):=\{x\in L^\infty\,|\, x\in S\ \mu\text{-a.e.}\}
\]
and equip $L^\infty$ with the usual norm topology. We denote the interior of a set $A$ by $\interior A$.

\begin{theorem}\label{thm:BV}
Assume that $\dom h$ is left-isc, and that for every $x\in X$,
\[
x\in\interior L^\infty(\dom h) \implies x\in\dom I_h \implies x_t \in \cl\dom h_t\ \forall t. 
\]
If $I_h$ is proper on $X$ and $X\cap\interior L^\infty(\dom h)\ne\emptyset$, then
\[
\inf_{x\in X} \int h(x)d\mu= \int \inf_{x\in\reals^d}h(x)d\mu.
\]
\end{theorem}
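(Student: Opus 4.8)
The inequality ``$\ge$'' is immediate, since for every $x\in X$ and every $t$ one has $h_t(x_t)\ge m(t):=\inf_{\xi\in\reals^d}h_t(\xi)$, so that $I_h(x)\ge\int m\,d\mu$ and hence $\inf_{x\in X}I_h\ge\int m\,d\mu$. The whole content is therefore the reverse inequality, and my plan is to combine the decomposable interchange rule on $L^\infty$ with a localization argument. Choosing $\bar x\in X\cap\interior L^\infty(\dom h)$, the first implication gives $\bar x\in\dom I_h$, whence $\int m\,d\mu\le I_h(\bar x)<\infty$; I treat the generic case $\int m\,d\mu\in\reals$, the case $-\infty$ being handled by the same approximation below. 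Since $L^\infty$ is decomposable and $\inf_{x\in L^\infty}I_h\le I_h(\bar x)<\infty$, the interchange rule \cite[Theorem~14.60]{rw98} recalled above gives
\[
\inf_{x\in L^\infty}I_h(x)=\int m\,d\mu.
\]
As $X\subseteq L^\infty$, it now suffices to prove $\inf_{x\in X}I_h\le\inf_{x\in L^\infty}I_h$, that is, to approximate an arbitrary $L^\infty$ near-minimizer by an element of $X$ without asymptotically raising the cost.

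As the statement is a reformulation of \cite[Theorem~4]{per14}, my primary route is to read off this approximation from that result after checking that its hypotheses are met. The two implications in the assumption are exactly what is needed: the first, applied at interior points, supplies a feasible point of $I_h$ and plays the role of the constraint qualification, while the second identifies $\dom I_h$ with functions taking values in $\cl\dom h_t$ for all $t$, which is the domain condition under which \cite[Theorem~4]{per14} is stated. Together with the left-isc of $\dom h$ these match the hypotheses there, and the interchange follows.

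For insight into why the approximation works, I would argue directly. Fix $\eps>0$ and $u\in L^\infty(\dom h)$ with $I_h(u)\le\int m\,d\mu+\eps$, and set $u^\lambda=(1-\lambda)u+\lambda\bar x$. Since $L^\infty(\dom h)$ is convex, $\bar x$ lies in its interior and $u$ in the set itself, $u^\lambda\in\interior L^\infty(\dom h)$ for $\lambda\in(0,1]$, with a slack $\delta>0$ uniform in $t$; moreover convexity of $h$ gives $I_h(u^\lambda)\le(1-\lambda)I_h(u)+\lambda I_h(\bar x)\to I_h(u)$ as $\lambda\downarrow0$. The point of passing to $u^\lambda$ is that each $u^\lambda_t$ sits in $\interior\dom h_t$ with room $\delta$, where the finite convex function $h_t$ is continuous in its argument, so small perturbations of $u^\lambda$ change the cost only a little. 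I would then build a left-continuous step function in $X$ that stays within $\delta$ of $u^\lambda$ on a large set: using left-isc of $\dom h$ to pick, on each interval of a fine partition of $[0,T]$ into left-open intervals, a constant that is feasible throughout the interval, and using Lusin's theorem to make $u^\lambda$ continuous off a set of arbitrarily small $\mu$-measure, where I set the approximant equal to $\bar x$. Letting the partition refine and $\lambda\downarrow0$ drives $I_h$ of the constructed element of $X$ below $\int m\,d\mu+C\eps$.

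The main obstacle is precisely this cost-controlled approximation, because $h_t$ is only lower semicontinuous in its argument, so proximity of a step function to $u^\lambda$ does not by itself bound the cost from above. The two structural hypotheses are what resolve it: the interior condition forces $u^\lambda$ into the region where $h_t$ is continuous (indeed locally Lipschitz), turning closeness in value into closeness in cost, and the left-isc of $\dom h$ guarantees that feasible left-continuous selections exist across each left-open interval, so that the approximant can be kept in the effective domain; atomlessness of $\mu$ then renders the cost of the finitely many transition regions and of the Lusin exceptional set negligible. Making these three ingredients quantitative, and checking they coincide with the requirements of \cite[Theorem~4]{per14}, is the only real work.
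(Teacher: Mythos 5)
Your primary route --- reducing the statement to \cite[Theorem~4]{per14} --- is the same as the paper's, and your reading of the roles of the two implications (constraint qualification plus domain condition) is broadly right. But the step where you assert that the hypotheses of \cite[Theorem~4]{per14} simply ``match'' is exactly where the paper's proof does its only real work, and you skip it. The issue is an a.e.\ versus everywhere discrepancy: the cited theorem needs that every left-continuous $w$ with $w_t\in\cl\dom h_t$ $\mu$-a.e.\ satisfies $w_t\in\cl\dom h_t$ for \emph{all} $t$, and this is not literally among the stated hypotheses. The paper derives it by revisiting the proof of \cite[Theorem~4]{per14}: for each $t$ and $\eps>0$ one produces $x\in\dom I_h$ with $|x_t-w_t|<\eps$; the second implication ($x\in\dom I_h\implies x_t\in\cl\dom h_t$ for all $t$) together with closedness of $\cl\dom h_t$ then gives $w_t\in\cl\dom h_t$. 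Without this bridge the citation does not go through as stated.

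Your backup direct argument also has a gap at the cost-control step. Pointwise continuity (even local Lipschitz continuity) of $h_t$ on $\interior\dom h_t$ does not control $I_h$ of the perturbed approximant, because the local modulus of $h_t$ near $u^\lambda_t$ can blow up as $t$ varies; nothing in the hypotheses bounds $h_t$ uniformly on the $\delta$-ball around $u^\lambda_t$. The mechanism that actually works operates at the level of the functional: the first implication says $I_h<+\infty$ on the open set $\interior L^\infty(\dom h)$, which (together with lower semicontinuity) makes $I_h$ norm-continuous there --- this is \cite[Corollary~8B]{roc74}, as the paper itself remarks after the theorem, and it is this continuity of $I_h$ that converts $L^\infty$-closeness of a step approximant into closeness of cost, as in \cite[Theorem~2]{roc71}. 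So the ingredient you need is ``$I_h$ is continuous on $\interior L^\infty(\dom h)$'', not ``$h_t$ is continuous on $\interior\dom h_t$''; as written, your direct sketch would not close.
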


\begin{proof}
By Theorems 4 and 2 of \cite{per14}, it suffices to show that every left-continuous $w$ with $w_t\in\cl\dom h_t$ $\mu$-a.e. satisfies $w_t\in\cl\dom h_t$ for all $t$. To have this, we can follow the arguments in the proof of \cite[Theorem 4]{per14} to get that, for every $\epsilon>0$ and $t$, there exists $x\in\dom I_h$ such that $|x_t-w_t|<\epsilon$. By assumption, such $x$ satisfies $x_t\in\cl\dom h_t$, and since $\epsilon>0$ was arbitrary, we have that $w_t\in\cl\dom h_t$ as well.
\end{proof}

When $I_h$ is lsc in $L^\infty$, the first condition 
\[
x\in\interior L^\infty(\dom h) \implies x\in\dom I_h
\]
in Theorem~\ref{thm:BV} is equivalent to $I_h$ being continuous at every $x\in \interior L^\infty(\dom h)$; see \cite[Corollary 8B]{roc74}. The second condition 
\[
x\in\dom I_h \implies x_t \in \cl\dom h_t\ \forall t
\]
in Theorem~\ref{thm:BV} holds in particular if $t\to \cl\dom h_t$ is {\em left-continuous} in the sense that it is left-isc and its graph $\{(t,x)\mid x\in\cl\dom h_t\}$ is closed in the product of $\tau$-topology on $[0,T]$ and the Euclidean topology on $\reals^d$.

\subsection{Duality}\label{ssec:dualityd}

The space $X$ may be identified with $\reals^d\times M$ where $M$ is the space of $\reals^d$-valued Radon measures on $[0,T]$. Indeed, given $x\in X$ there is a unique $\reals^d$-valued Radon measure $Dx$ on $[0,T]$ such that $x_t=x_0+Dx([0,t))$ for all $t\in[0,T]$ and $x_t=x_0+Dx([0,T])$ for $t>T$; see e.g.~\cite[Theorem~3.29]{fol99}. The value of $x\in X$ on $(T,\infty)$ will be denoted by $x_{T+}$. 

By the Riesz representation theorem, $M$ may be identified with the Banach dual of the space $C$ of continuous functions on $[0,T]$ when $C$ is equipped with the supremum norm. Indeed, $C$ and $M$ are in separating duality under the bilinear form
\[
\langle u,\theta\rangle :=\int ud\theta.
\]
Similarly, $X$ and $V:=\reals^d\times C$ are in separating duality under the bilinear form
\[
\langle v,x\rangle := v_{-\infty}\cdot x_{0}+\int vdx.
\]
Here, the notation corresponds to our convention of identifying elements of $V$ with continuous functions on $\{-\infty\}\cup[0,T]$ (Analogously, we may identify $X$ with the space of Radon measures on $\{-\infty\}\cup[0,T]$). 

\cite{roc71} and more recently \cite{per14} gave conditions under which the conjugate of an integral functional $I_h$ on $C$ can be expressed as
\[
(I_h)^*=J_{h^*},
\]
where, for a normal integrand $f$, the functional $J_f:M\to\ereals$ is defined by
\[
J_f(\theta)=\int f(d\theta^a/d\mu)d\mu+\int f^\infty(d\theta^s/d|\theta^s|)d|\theta^s|,
\]
where $\theta^a$ and $\theta^s$ are the absolutely continuous and the singular part, respectively, of $\theta$ with respect to $\mu$, $|\theta^s|$ is the total variation of $\theta^s$, and $f^\infty$ is the normal integrand defined pointwise as the recession function of $f_t$; see the appendix.

Theorem~\ref{thm:conjugated} below gives an expression for the conjugate and subdifferential of $I_h$ with respect to the pairing of $X$ with $V$. Given $x\in X$, we denote by $\partial h(x)$ the set-valued mapping $t\mapsto\partial h_t(x_t)$. We also use the notation $\partial^sh:=\partial\delta_{\cl\dom h}$. We will denote by $V_{BV}$ the linear subspace of $V$ consisting of $v\in V$ that have bounded variation on $[0,T]$. Given $v\in V_{BV}$, we set $dv/d\mu:=d(Dv)^a/d\mu$ and $dv/d|Dv^s|:=d(Dv)^s/d|(Dv)^s|$.

\begin{theorem}\label{thm:conjugated}
Let $f(x)=I_h(x)+k_0(x_0)+k_T(x_{T+})$, where $k_0$ and $k_T$ are closed proper convex functions on $\reals^d$ and $h$ is a convex normal integrand satisfying the assumptions of Theorem~\ref{thm:BV}. Then $f$ is closed and
\[
f^*(v)=
\begin{cases}
J_{h^*}(-Dv)+k_0^*(v_{-\infty}-v_0)+k_T^*(v_T) & \text{if $v\in V_{BV}$},\\
+\infty & \text{otherwise}.
\end{cases}
\]
Moreover, $v\in\partial f(x)$ if and only if 
\begin{align*}
-dv/d\mu &\in \partial h(x)\quad \mu\text{-a.e.},\\
-dv/d|Dv^s| &\in \partial^s h(x)\quad |(Dv)^s|\text{-a.e.},\\
v_{-\infty}-v_0 &\in\partial k_0(x_0),\\
v_{T} &\in\partial k_T(x_{T+}).
\end{align*}
\end{theorem}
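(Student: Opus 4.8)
The plan is to compute $f^*$ directly from $f^*(v)=\sup_{x\in X}\{\langle v,x\rangle-f(x)\}$, using integration by parts to bring the pairing into a form governed by the interchange rule of Theorem~\ref{thm:BV}. I would first treat $v\in V_{BV}$. Since $v$ is continuous on $[0,T]$, its Stieltjes measure $Dv$ is atomless, and integrating by parts in $\langle v,x\rangle=v_{-\infty}\cdot x_0+\int v\,dDx$ yields
\[
\langle v,x\rangle=(v_{-\infty}-v_0)\cdot x_0+v_T\cdot x_{T+}-\int_{[0,T]}x\,dDv,
\]
which separates two boundary contributions from an interior term. Because both $\mu$ and $Dv$ are atomless, neither $I_h(x)$ nor $\int x\,dDv$ sees the single value $x_0$ or the value $x_{T+}$ outside $[0,T]$; moreover these two values can be adjusted freely through the jumps $Dx(\{0\})$ and $Dx(\{T\})$ without changing the interior term. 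This decouples the supremum into three independent pieces, the suprema over $x_0$ and $x_{T+}$ producing $k_0^*(v_{-\infty}-v_0)$ and $k_T^*(v_T)$, and leaving the interior supremum of $-\int x\,dDv-I_h(x)$ over $x\in X$.

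For the interior supremum I would split $Dv=(Dv)^a+(Dv)^s$ relative to $\mu$. The absolutely continuous part merges with $h$ into the perturbed integrand $a\mapsto h_t(a)+a\cdot(dv/d\mu)(t)$, which has the same domain as $h$ and so inherits the hypotheses of Theorem~\ref{thm:BV}; the interchange rule then converts the supremum against $\mu$ into the pointwise conjugate $\int h^*(-dv/d\mu)\,d\mu$. The remaining singular term $-\int x\,d(Dv)^s$ must contribute the recession term $\int(h^*)^\infty(-dv/d|Dv^s|)\,d|Dv^s|$, so that the two together form $J_{h^*}(-Dv)$. This is the step I expect to be the main obstacle: the interchange rule optimizes only against $\mu$, whereas the singular contribution couples the behaviour of the single function $x$ on the mutually singular supports of $\mu$ and $|Dv^s|$. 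The recession function $(h^*)^\infty$, the support function of $\cl\dom h_t$, enters precisely because realizing the direction $-dv/d|Dv^s|$ on the singular set forces $x$ toward the boundary of $\cl\dom h_t$ there while $I_h(x)$ must stay finite on the $\mu$-set; controlling both simultaneously is exactly what the domain conditions of Theorem~\ref{thm:BV}, together with the finer interchange results underlying it, are designed to deliver.

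To handle $v\notin V_{BV}$ I would fix a base point $\bar x\in X\cap\interior L^\infty(\dom h)$, on which $I_h$ is finite and which tolerates perturbations $r$ of small supremum norm with $\bar x+r\in\dom I_h$. Since $v$ has infinite variation on $[0,T]$, such $r$ can be chosen with $\|r\|_\infty$ arbitrarily small yet $\int v\,dDr$ arbitrarily large; testing $f^*(v)$ against $\bar x+r$ then shows $f^*(v)=+\infty$.

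Closedness of $f$ is the identity $f=f^{**}$ in the pairing $(X,V)$, i.e.\ weak${}^*$ lower semicontinuity of $f$ on $X=V^*$. The evaluations $x\mapsto x_0$ and $x\mapsto x_{T+}$ are $\sigma(X,V)$-continuous, being represented through the pairing by constant elements of $V$, so the $k_0,k_T$ terms are already closed; conjugating the formula for $f^*$ and using that $J_{h^*}$ is the conjugate of $I_h$ in the classical $C$--$M$ duality of \cite{roc71,per14} returns $I_h$, whence $f^{**}=f$. Finally, the subdifferential follows from the Fenchel--Young equality $v\in\partial f(x)\iff f(x)+f^*(v)=\langle v,x\rangle$. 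Substituting the conjugate formula and the integration-by-parts identity rewrites this single equality as the sum of three nonnegative Fenchel gaps --- one at $x_0$, one at $x_{T+}$, and one for the integral functionals --- which therefore must all vanish. The vanishing of the first two gives $v_{-\infty}-v_0\in\partial k_0(x_0)$ and $v_T\in\partial k_T(x_{T+})$, while the vanishing of the integral gap splits into the pointwise equalities $-dv/d\mu\in\partial h(x)$ $\mu$-a.e.\ and $-dv/d|Dv^s|\in\partial^s h(x)$ $|(Dv)^s|$-a.e.; conversely these four inclusions restore the global equality.
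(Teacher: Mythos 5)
Your overall strategy --- integration by parts to isolate the two endpoint terms, free adjustment of $x_0$ and $x_{T+}$ (legitimate because $\mu$ and $Dv$ are atomless), the interchange rule for the interior supremum, and the perturbation argument giving $f^*(v)=+\infty$ off $V_{BV}$ --- is exactly the blueprint of the paper's argument (the paper proves Theorem~\ref{thm:conjugated} by invoking the stochastic Theorem~\ref{thm:icr}, whose proof follows precisely these steps). But the step you yourself flag as ``the main obstacle'' is left unresolved, and it is the heart of the theorem. You cannot treat the absolutely continuous and singular parts of $Dv$ by two separate applications of the interchange rule, since there is only one supremum over one function $x$ coupling both. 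The device that closes this gap is to form the single atomless measure $\bar\mu:=\mu+|(Dv)^s|$, choose a Borel set $A$ carrying $\mu$ and null for $|(Dv)^s|$, and define the normal integrand
\[
\bar h_t(a)=\begin{cases} h_t(a)+a\cdot (dv/d\mu)_t & t\in A,\\ \delta_{\cl\dom h_t}(a)+a\cdot (dv/d|Dv^s|)_t & t\notin A.\end{cases}
\]
The hypothesis of Theorem~\ref{thm:BV} that $x\in\dom I_h\implies x_t\in\cl\dom h_t$ for \emph{all} $t$ (not merely $\mu$-a.e.) is what makes the indicator term vanish on $\dom I_h$, so that $\int\bar h(x)\,d\bar\mu=I_h(x)+\int x\,dDv$ on $X$; one then verifies that $(\bar h,\bar\mu)$ again satisfies the assumptions of Theorem~\ref{thm:BV} and applies the interchange rule once, with $\inf_a\bar h_t(a)$ equal to $-h_t^*(-dv/d\mu)$ on $A$ and to $-(h_t^*)^\infty(-dv/d|Dv^s|)$ off $A$. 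Without this construction the identity $\sup_x\{-\int x\,dDv-I_h(x)\}=J_{h^*}(-Dv)$ is not established, and the whole conjugate formula (hence also the subdifferential characterization, which you correctly derive from it via Fenchel equality) remains unproved.

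A smaller point: your closedness argument is circular as stated, since ``conjugating the formula for $f^*$ returns $I_h$'' presupposes $f=f^{**}$, which is what closedness means. The clean route, used implicitly in the paper (see the proof of Lemma~\ref{lem:mif} and \cite{pp14}), is that atomlessness of $\mu$ makes the embedding of $(X,\sigma(X,V))$ into $(L^\infty(\mu),\sigma(L^\infty,L^1))$ weakly continuous, $I_h$ is $\sigma(L^\infty,L^1)$-lsc, and the evaluations $x\mapsto x_0$, $x\mapsto x_{T+}$ are $\sigma(X,V)$-continuous; $f$ is then a sum of weakly lsc convex functions, hence closed.
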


\begin{proof}
This is a special case of Theorem~\ref{thm:ifn} below. The conjugate formula also follows as in the proof of \cite[Theorem 2.2]{pp14}.
\end{proof}



\section{Integral functionals of BV processes}\label{sec:stoch}

Let $(\Omega,F,P)$ be a probability space and let $\FF$ be an increasing sequence of $\sigma$-algebras on $\Omega$ that satisfies the usual hypotheses that $\F_t=\bigcap_{t'>t}\F_{t'}$ and $\F_0$ contains all the $P$-null sets. We will denote the linear space of left-continuous adapted processes of {\em essentially bounded variation} by $\N^\infty$. That is, $x\in\N^\infty$ if $x\in X$ almost surely, the pathwise total variation of $x$ is essentially bounded and $x_t$ is $\F_t$-measurable for all $t\in[0,T]$. 

Recall that the {\em predictable} and {\em optional} $\sigma$-algebras on $\Omega\times[0,T]$ are the ones generated by left- and right-continuous, respectively, adapted processes. In particular, the elements of $\N^\infty$ are predictable. We denote by $\T$ the set of {\em stopping times}, that is, functions $\tau:\Omega\to[0,T]\cup\{+\infty\}$ such that $\{(\omega,t)\,|\,\tau(\omega)\le t\}$ is optional. 
If a (not necessarily adapted) stochastic process $v$ is {\em $\T$-integrable} in the sense that $v_\tau$ is integrable for every $\tau\in\T$, then, e.g., by \cite[Theorem 5.1]{hwy92}, there exists an optional process $\op v$ such that
\begin{align*}
E[v_\tau\one_{\{\tau<\infty\}}\mid \F_\tau] &= \op{v}_\tau\one_{\{\tau<\infty\}}\quad P\text{-a.s. for all $\tau\in\T$},
\end{align*}
where $\F_\tau:=\{A\in\F\mid A\cap\{\tau\le t\}\in\F_t\ \forall t\in\reals_+\}$. The process $\op v$ is called the {\em optional projection} of $v$ and it is unique a.s.e.\footnote{The abbreviation a.s.e.\ stands for ``$P$-almost surely everywhere on $[0,T]$'', that is, outside an evanescent set.}. 

Recall that (see e.g.\ \cite[Theorem~3.16]{hwy92}) if $\tau$ is a stopping time then $[\tau,\infty)\subset\Omega\times\F$ is optional. A stopping time is said to be {\em predictable} if $[\tau,\infty)$ predictable. If a (not necessarily adapted) stochastic process $v$ is such that $v_\tau$ is integrable for every predictable time $\tau\in\T$, then, e.g., by \cite[Theorem 5.2]{hwy92}, there exists a predictable process $\pp v$ such that
\[
E[v_\tau\one_{\{\tau<\infty\}}\mid \F_{\tau-}] = \pp{v}_\tau\one_{\{\tau<\infty\}}\quad P\text{-a.s. for all predictable $\tau\in\T$},
\]
where $\F_{\tau-}:=\F_0\vee\sigma\{A\cap\{t<\tau\}\mid A\in\F_t,\, t\in\reals_+\}$. The process $\pp v$ is called the {\em predictable projection} of $v$ and it is unique a.s.e.

Let $\mu$ be a random positive measure on $[0,T]$ and let 
\[
\LL^p:=L^p(\Omega\times[0,T],\F\otimes\B([0,T]),\eta;\reals^d),
\]
where the measure $\eta$ is defined by $\eta(A):=E\int\one_Ad\mu$. We will assume throughout that $\mu$ is atomless, has full support almost surely and that it is {\em optional} in the sense that 
\[
E\int vd\mu = E\int \op vd\mu
\]
for all bounded $v$.

Let $h$ be a predictable normal integrand on $\reals^d$ and define $I_h:X\times\Omega\to\ereals$ by
\[
I_h(x,\omega):=I_{h(\cdot,\omega)}(x),
\]
where the right side is defined as in Section~\ref{sec:det}. We assume throughout that there exist $v\in\LL^1(\reals^d)$ and nonnegative $\alpha\in\LL^1$ such that
\begin{equation}\label{eq:lb}
h(x)\ge x\cdot v - \alpha.
\end{equation}
The following is proved in the appendix.

\begin{lemma}\label{lem:mif}
The function $I_h$ is a normal integrand on $X$. 
\end{lemma}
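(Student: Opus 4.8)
The plan is to verify the two defining properties of a normal integrand on the Suslin space $X$: that each section $I_h(\cdot,\omega)$, which is just the deterministic integral functional $I_{h(\cdot,\omega)}$ of Section~\ref{sec:det}, is lower semicontinuous, and that $(x,\omega)\mapsto I_h(x,\omega)$ is jointly measurable. Once both are in hand, the characterization of normal integrands on Suslin spaces recalled in the appendix (in the spirit of \cite{val75}) delivers the claim. Here I use that $X$, identified with the dual of the separable Banach space $V=\reals^d\times C$, is a Suslin space in its weak* topology and that bounded subsets of $X$ are weak* metrizable.

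First I would treat the joint measurability. The key point is that the evaluation $e\colon X\times[0,T]\to\reals^d$, $(x,t)\mapsto x_t$, is $\B(X)\otimes\B([0,T])$-measurable: for fixed $t$ the map $x\mapsto x_t=x_0+Dx([0,t))$ is Borel (a pointwise limit of weak*-continuous pairings), while for fixed $x$ the path $t\mapsto x_t$ is left-continuous, and joint measurability then follows by writing $x_t$ as a pointwise limit of the jointly measurable step maps assembled from evaluations at rational times. Composing $e$ with the predictable normal integrand $h$ shows that $(x,\omega,t)\mapsto h_t(x_t,\omega)$ is $\B(X)\otimes\F\otimes\B([0,T])$-measurable. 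Integrating out $t$ against $\mu$, which acts as a transition kernel from $\Omega$ to $[0,T]$ and whose integral of a jointly measurable, bounded-below function is measurable in the remaining variables, then yields the $\B(X)\otimes\F$-measurability of $I_h$; the affine minorant \eqref{eq:lb} guarantees that this integral is unambiguously defined in $\ereals$.

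The heart of the argument, and the step I expect to be the main obstacle, is the lower semicontinuity of the sections. Fix $\omega$ and take $x^n\to x$ in $X$. Since weak* convergent sequences are bounded (Banach--Steinhaus), the pathwise variations $|Dx^n|([0,T])$ are uniformly bounded, and a subsequence-extraction argument based on Helly's selection theorem together with the identification of the weak* limit forces $x^n_t\to x_t$ at every continuity point of $Dx$, hence for $\mu$-almost every $t$ because $\mu$ is atomless. Writing $g^n_t:=h_t(x^n_t)-(x^n_t\cdot v_t-\alpha_t)\ge 0$ by \eqref{eq:lb}, lower semicontinuity of the integrands $h_t$ gives $\liminf_n g^n_t\ge h_t(x_t)-(x_t\cdot v_t-\alpha_t)$ for $\mu$-a.e.\ $t$, so Fatou's lemma applies to the $g^n$; meanwhile the uniform variation bound yields a uniform sup-norm bound on the $x^n$, so the affine term $\int x^n_t\cdot v_t\,d\mu_t$ converges by dominated convergence since $v\in\LL^1$. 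Combining these gives $\liminf_n I_h(x^n,\omega)\ge I_h(x,\omega)$. The delicate point is precisely this passage from weak* convergence in $X$ to $\mu$-a.e.\ pointwise convergence of the paths; the Helly argument secures it along sequences, which (given the weak* metrizability of the bounded sets of $X$ invoked in the appendix's notion of normal integrand, and, when $h$ is convex, an application of the Krein--\v{S}mulian theorem) is exactly the form of lower semicontinuity required to conclude.
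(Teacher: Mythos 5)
Your reduction to (i) almost-sure lower semicontinuity of the sections $I_h(\cdot,\omega)$ and (ii) joint measurability of $I_h$ is exactly the paper's (which invokes \cite[Lemma~22]{pp15a} for this equivalence). Your measurability argument, though executed differently, is sound: the paper instead embeds $X$ into the Polish space $S$ of left-continuous functions with right limits under the Skorokhod topology, applies a measurability result for normal integrands on such spaces, and checks that the injection $X\to S$ is Borel; your direct route through the joint measurability of the evaluation $(x,t)\mapsto x_t$ followed by integration against the kernel $\mu(\omega,dt)$ reaches the same conclusion more elementarily, and the lower bound \eqref{eq:lb} handles the definedness of the integral as you say.

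The genuine problem is in the lower semicontinuity step. The assertion that weak* convergence $x^n\to x$ in $X$ ``forces $x^n_t\to x_t$ at every continuity point of $Dx$'' is false: take $x^n=\one_{(t_0-1/n,\,t_0+1/n]}$, so that $Dx^n=\delta_{t_0-1/n}-\delta_{t_0+1/n}\to 0$ weak* and hence $x^n\to 0$ in $\sigma(X,V)$ with uniformly bounded variation, yet $x^n_{t_0}=1$ for all $n$ even though $Dx=0$ has no atoms anywhere. Positive and negative mass of $Dx^n$ approaching $t_0$ from opposite sides and cancelling is exactly what weak* convergence of vector measures cannot detect, so neither Banach--Steinhaus nor the atomlessness of $\mu$ rescues the claim as stated. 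Your Helly strategy can be repaired, but the identification must run differently: the pointwise Helly limit $y$ of a subsequence satisfies $y=x$ Lebesgue-a.e.\ (integrate $Dx^n$ by parts against $C^1$ test functions), and since both functions are regulated this forces $y=x$ off a countable set, which is $\mu$-null precisely because $\mu$ is atomless; that, not continuity points of $Dx$, is what yields $x^n_t\to x_t$ $\mu$-a.e.\ along the subsequence, after which Fatou and your Krein--\v{S}mulian upgrade (legitimate here since $h$ is convex and $V$ is separable) go through. The paper sidesteps all of this: atomlessness of $\mu(\omega)$ makes the embedding $(X,\sigma(X,V))\to(L_\omega^\infty,\sigma(L_\omega^\infty,L_\omega^1))$ continuous, and the lower bound \eqref{eq:lb} makes $I_h(\cdot,\omega)$ weakly lsc on $L_\omega^\infty$ by \cite[Theorem~3C]{roc76}, so lower semicontinuity on $X$ follows by composition with no sequential argument at all.
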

By Lemma~\ref{lem:mif}, the integral functional
\[
EI_h(x):=\int_\Omega I_h(x(\omega),\omega)dP(\omega)
\]
is a well-defined convex function on $\N^\infty$. Section~\ref{ssec:icr} below gives an interchange rule for $EI_h$ and Section~\ref{ssec:duality} gives an expression for the conjugate of $EI_h$ with respect to the pairing of $\N^\infty$ with regular processes to be defined in Section~\ref{ssec:banach}.

\subsection{Interchange rule}\label{ssec:icr}

The following result extends Theorem~\ref{thm:BV} to the stochastic setting. Given a measurable set-valued mapping $S$ from $\Omega\times[0,T]$ to $\reals^d$, we will use the notation
\[
\LL^\infty(S):=\{x\in \LL^\infty\,|\, x\in S\ \eta\text{-a.e.}\}
\]
and equip $\LL^\infty$ with the usual norm topology. We denote the closed unit ball with radius $r$ by $\uball_r$. 
\begin{theorem}\label{thm:ifn}
Assume that $h$ satisfies the assumptions of Theorem~\ref{thm:BV} almost surely and that there exists an $\bar x\in\N^\infty$ with $\bar x\in\inte\LL^\infty(\dom h)$ and that for every $x\in\interior\LL^\infty(\dom h)$ there is an $r>0$ and $\beta\in\LL^1$ with 
\[
h_t(x_t+x')\le\beta_t\quad \forall x'\in\uball_r.
\]
Then,
\begin{align*}
\inf_{x\in\N^\infty} E\int h(x)d\mu = E\int\inf_{x\in\reals^d}h(x)d\mu.
\end{align*}
\end{theorem}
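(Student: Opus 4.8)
The plan is to prove the two inequalities separately, with all the real content in the ``$\le$'' direction. The ``$\ge$'' inequality is immediate: every $x\in\N^\infty$ satisfies $x(\omega)\in X$ almost surely, so $\int h(x(\omega),\omega)d\mu(\omega)\ge\int\inf_{y\in\reals^d}h_t(y,\omega)d\mu_t(\omega)$ pathwise, and taking expectations followed by the infimum over $x\in\N^\infty$ gives $\inf_{\N^\infty}E\int h(x)d\mu\ge E\int\inf_y h(y)d\mu$. I would first record that the problem is nondegenerate: the feasible $\bar x\in\N^\infty\cap\inte\LL^\infty(\dom h)$ together with the local boundedness hypothesis ($h_t(\bar x_t+x')\le\beta_t$ with $\beta\in\LL^1$) yields $EI_h(\bar x)\le E\int\beta\,d\mu<\infty$, so the left-hand infimum is $<+\infty$ and the common value $E\int\inf_y h\,d\mu\le EI_h(\bar x)<+\infty$ is well defined in $[-\infty,+\infty)$.

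For the reverse inequality I would factor the interchange into the time variable and the probability variable. By Lemma~\ref{lem:mif}, $I_h$ is a normal integrand on the Suslin space $X$, and by hypothesis $h(\cdot,\omega)$ satisfies the assumptions of Theorem~\ref{thm:BV} for almost every $\omega$; applying that theorem pathwise gives, for almost every $\omega$,
\[
\inf_{x\in X}I_h(x,\omega)=\int\inf_{y\in\reals^d}h_t(y,\omega)d\mu_t(\omega)=:g(\omega).
\]
The optimal value $g$ is $\F$-measurable as the infimum of a normal integrand over a Suslin space, in the sense of \cite{val75}, so it remains to establish
\[
\inf_{x\in\N^\infty}EI_h(x)\le Eg.
\]
This is the interchange of the infimum over $\N^\infty$ with the expectation, and it reduces to producing, for each $\eps>0$, a process $x^\eps\in\N^\infty$ with $EI_h(x^\eps)\le Eg+\eps$ (and, on the set where $g=-\infty$, driving the objective below any prescribed level).

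The construction of $x^\eps$ is the heart of the matter, and it is where I would combine the Suslin-valued interchange machinery of \cite{val75} with the deterministic approximation underlying Theorem~\ref{thm:BV}. Since $h$ is a \emph{predictable} normal integrand, its pointwise near-minimizers can be selected as a predictable process by a measurable selection theorem; feeding such a selection into the partition-of-unity and left-continuous interpolation construction from the proof of \cite[Theorem~4]{per14}, carried out along a deterministic time grid, produces a left-continuous process of bounded variation that is again predictable, hence adapted. I would then use the feasible point $\bar x$ and the local boundedness hypothesis to keep the approximants inside $\dom I_h$ and to dominate $I_h(x^\eps)$ from above by an $\LL^1$ function, while the lower bound \eqref{eq:lb} together with the essential boundedness of $x^\eps$ dominates $I_h(x^\eps)$ from below by an $\LL^1$ function; this legitimizes passing the expectation through the pathwise estimate $\int h(x^\eps(\omega),\omega)d\mu(\omega)\le g(\omega)+\eps$. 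A truncation of the total variation guarantees membership in $\N^\infty$ rather than merely in the BV processes.

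The main obstacle I anticipate is precisely the adaptedness and essential-boundedness constraints that make $\N^\infty$ \emph{non-decomposable}: unlike in the setting of \cite{val75}, one cannot freely patch $\F$-measurable near-optimal selections across $\Omega$, because such patching preserves adaptedness only over $\F_0$-measurable sets and may destroy the uniform variation bound. The resolution is to perform the entire deterministic approximation predictably in $(\omega,t)$ rather than by $\omega$-wise patching, so that the filtration is respected by construction. Verifying that the operations in the \cite{per14} approximation---the left-continuous modification, the partitions of unity in $t$, the variation truncation, and the domain adjustment toward $\bar x$---are jointly measurable and predictability-preserving is the step that will require the most care.
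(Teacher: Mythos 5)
Your splitting into the two inequalities is right, and the ``$\ge$'' direction is indeed immediate, but the construction you sketch for the ``$\le$'' direction has a genuine gap, located exactly at the step you yourself flag as ``requiring the most care.'' First, a deterministic time grid cannot work: the fineness of partition needed in the \cite{per14}-type interpolation to achieve a pathwise error $\eps$ depends on $\omega$ (through the oscillation of the near-minimizing selection and the modulus of left-inner semicontinuity of $\dom h(\cdot,\omega)$), and nothing in the hypotheses gives uniformity over $\Omega$. Second, even granting a predictable BV approximant, you never establish its near-optimality: dominating $I_h(x^\eps)$ from above by an $\LL^1$ function yields finiteness, not the estimate $I_h(x^\eps)\le g+\eps$. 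What is actually needed is that $EI_h$ changes by at most $\eps$ under sup-norm perturbations of size $r'$ of a point of $\interior\LL^\infty(\dom h)$; this is a nontrivial continuity statement for which the paper invokes \cite[Theorem 2]{roc71}, and it is where the hypothesis $h_t(x_t+x')\le\beta_t$ for all $x'\in\uball_r$ does its real work --- in your proposal that hypothesis is only used to make the value finite.

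For comparison, the paper avoids any direct predictable selection of near-minimizers. It first relaxes to the decomposable class $L^\infty(X)$ of possibly non-adapted BV processes, where the interchange over $\Omega$ is the standard one from \cite{pp15a} and Theorem~\ref{thm:BV} applies pathwise, giving $\inf_{x\in L^\infty(X)}EI_h(x)=E\int\inf_{y\in\reals^d}h(y)d\mu$. It then replaces a non-adapted near-minimizer by its predictable projection, which does not increase $EI_h$ by Jensen's inequality for predictable normal integrands (Lemma~\ref{lem:jin}); this projection step is the key idea missing from your argument. Finally it approximates the predictable projection (left-continuous with right limits, but not BV) by the piecewise-constant predictable process $\hat x=\sum_\nu\pp{\tilde x}_{\tau^\nu}\one_{(\tau^\nu,\tau^{\nu+1}]}$ built from an $\omega$-dependent sequence of stopping times --- the adapted substitute for your deterministic grid --- and truncates the variation with a further stopping time to land in $\N^\infty$, controlling the error via \cite[Theorem 2]{roc71} and Fatou's lemma. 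Without the projection step and the stopping-time discretization, your ``$\le$'' direction does not go through as written.
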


\begin{proof}
By Lemma~\ref{lem:mif}, $I_h$ is a convex normal integrand on $X\times\Omega$. Applying first the interchange rule for expectation and minimization \cite[Theorem~1]{pp15a} and then the interchange rule Theorem~\ref{thm:BV}, we get
\begin{align*}
\inf_{x\in L^\infty(X)} E\int h(x)d\mu = E\inf_{x\in X}\int h(x)d\mu =E\int\inf_{x\in\reals^d}h(x)d\mu,
\end{align*}
where $L^\infty(X)$ is the space of (possibly nonadapted) left continuous processes of essentially bounded variation. On the other hand,
\begin{align*}
\inf_{x\in \N^\infty} EI_h(x) &\ge \inf_{x\in L^\infty(X)} EI_h(x)\ge \inf_{x\in L^\infty(X)} EI_h({^px}),
\end{align*}
where the second inequality follows from Jensen's inequality for predictable normal integrands; see Lemma~\ref{lem:jin} in the appendix. By \cite[Theorem VI.43]{dm82}, $\pp x$ is left continuous with right limits. 

We show next that the above infimum can be restricted to those $x\in L^\infty(X)$ for which $\uball(x_t,\tilde r)\subset\dom h_t$ for some $\tilde r>0$. If $x\in\dom EI_h$, then $x\in\dom I_h(x)$ a.s., so $x_t\in\cl\dom h_t$ a.s.e. We may assume that $\bar x$ satisfies $\bar x \in\dom h$, and we may redefine $x$ as $\bar x$ on a $P$-null set so that $x\in\cl\dom h$. Defining $x^\nu=\frac{1}{\nu}\bar x+(1-\frac{1}{\nu})x$, we have $x^\nu\in\interior\dom h$ for all $\nu$ and, by convexity, $EI_h(x^\nu)\le \frac{1}{\nu}EI_h(\bar x)+(1-\frac{1}{\nu})EI_h(x)$. Moreover, since $\uball(\bar x_t,r)\subset\dom h_t$ and $x_t\in\cl\dom h_t$, we have $\uball(x^\nu_t,r/\nu)\subset\dom h_t$ by convexity.

Thus, it suffices to show that, for every $\epsilon>0$ and $\tilde x\in L^{\infty}(X)$ with $\uball(\tilde x_t,\tilde r)\subset\dom h_t$ for some $\tilde r>0$, there exists $x\in\N^\infty$ such that 
\[
EI_h({^p \tilde x}) > EI_h(x) - \epsilon.
\]
Since $\dom h$ is predictable, it follows from Jensen's inequality for set-valued mappings \cite[Corollary 20]{kp15} that $\uball( {^p\tilde x}_t,\tilde r)\subset\dom h_t$. By \cite[Theorem 2]{roc71}, there exists $r'\in(0,\tilde r)$ such that
\[
 EI_h({^p \tilde x}) > EI_h(x) - \epsilon/2
\]
for every $x$ such that $|x_t-{^p\tilde x}_t|<r'$ a.s.e. For positive integers $\nu$, we define recursively $\tau^0=0$ and $\tau^\nu=\inf\{t\ge \tau^{\nu-1}\mid |{^p \tilde x}_t-{^p\tilde x}_{\tau^{\nu-1}}|\ge r'/2\}$ so that the process $\hat x=\sum_{\nu=0}^\infty \pp{\tilde x}_{\tau^\nu}\one_{(\tau^\nu,\tau^{\nu+1}]}$ is predictable with $|\hat x_t-{^p\tilde x}_t|<r'$. 

For every $n$, we define a stopping time $\sigma^n=\inf\{t\mid |D\hat x|([0,t])\ge n\}$ and a process $x^{n}=\one_{[0,\sigma^n]}\hat x+\one_{(\sigma^n,T]}\bar x$. By construction, $x^n\in\N^\infty$ and $x^n\rightarrow \hat x$ a.s.e. Moreover,
\[
h(x^n)\le \max\{\hat\beta,\bar\beta\},
\]
for some positive $\hat\beta$ and $\bar\beta$ with $E\int\max\{\hat\beta,\bar\beta\} d\mu<\infty$. Therefore, we may apply Fatou's lemma on $[0,T]\times\Omega$ to obtain 
\[
EI_h(\hat x)\ge EI_h(x^n)-\epsilon/2
\]
for $n$ large enough.
\end{proof}


\subsection{$\N^\infty$ as a Banach dual}\label{ssec:banach}


This section presents the basic functional analytic framework for our main result to be given in Section~\ref{ssec:duality}. In particular, we identify $\N^\infty$ as the topological dual of the Banach space of regular processes. We also recall some basic properties of quasimartingales that feature in the main result.

Let $L^\infty(M)$ be the linear space of {\em random $\reals^d$-valued Radon measures} with essentially bounded variation and let $\M^\infty\subseteq L^\infty(M)$ be the space of essentially bounded {\em optional} Radon measures on $\reals^d$, i.e.\ random measures $\theta\in L^\infty(M)$ such that
\[
E\int v d\theta = E\int \op{v}d\theta\quad\forall v\in L^1(C).
\]
Here $L^1(C)$ denotes the Banach space of random continuous functions $v$ with the norm
\[
\|v\|_{L^1(C)}:=E\sup_{t\in[0,T]}|v_t|.
\]
The linear mapping $x\mapsto(x_0,Dx)$ defines an isomorphism from $\N^\infty$ to $\reals^d\times\M^\infty$. By Theorem~\ref{thm:banach} below, $\M^\infty$ may be identified with the dual of the Banach space of regular processes, so $\N^\infty$ is a Banach dual as well.

Recall that a process is {\em regular} if it is the optional projection of a process from $L^1(C)$; see \cite{bis78}. 
We will denote the space of regular processes by $\R^1$. The following result, essentially proved already in Bismut~\cite{bis78}, is from \cite{pp15a}. 

\begin{theorem}\label{thm:banach}
The space $\R^1$ is a Banach space under the norm
\[
\|v\|_{\R^1} := \sup_{\tau\in\T}E|v_\tau|
\]
and its dual may be identified with $\M^\infty$ through the bilinear form	
\[
\langle v,\theta\rangle_{\R^1} := E\int vd\theta.
\]
The dual norm can be expressed as 
\[
\|\theta\|_{\M^\infty} = \esssup\|\theta\|_{TV}.
\]
\end{theorem}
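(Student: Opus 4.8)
The plan is to realize $\R^1$ as a quotient of a Bochner space and to transport both completeness and duality through the optional projection. Since $C=C([0,T];\reals^d)$ is separable, $L^1(C)=L^1(\Omega;C)$ is a Bochner space whose dual is $L^\infty(M)$ under the pairing $\langle g,\theta\rangle=E\int g\,d\theta$, with dual norm $\esssup\|\theta\|_{TV}$; this is the standard representation of the dual of $L^1$ of a separable Banach space. The optional projection $g\mapsto\op g$ maps $L^1(C)$ onto $\R^1$ by the definition of regularity. First I would record that this map is a contraction: since $\op g_\tau=E[g_\tau\mid\F_\tau]$ for every $\tau\in\T$, conditional Jensen gives $E|\op g_\tau|\le E|g_\tau|\le E\sup_t|g_t|=\|g\|_{L^1(C)}$, and taking the supremum over $\tau$ yields $\|\op g\|_{\R^1}\le\|g\|_{L^1(C)}$. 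In particular the kernel $N=\{g\in L^1(C)\mid\op g=0\}$ is closed.

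The central step is the isometric identification $\R^1\cong L^1(C)/N$. The contraction shows that the induced bijection $L^1(C)/N\to\R^1$ is norm-nonincreasing, so it remains to prove the reverse quotient-norm estimate
\[
\inf\{\|g\|_{L^1(C)}\mid \op g=v\}\le\sup_{\tau\in\T}E|v_\tau|
\]
for every regular $v$; equivalently, every regular process admits a continuous lift whose expected supremum is controlled by $\sup_\tau E|v_\tau|$. This is the part I expect to be the main obstacle. I would obtain it from the dual identity
\[
\sup_{\tau\in\T}E|v_\tau|=\sup\left\{\left|E\int v\,d\theta\right|\midb \theta\in\M^\infty,\ \esssup\|\theta\|_{TV}\le1\right\},
\]
whose ``$\ge$'' direction follows by testing against the optional measures $\theta=a\,\delta_\tau$ with $a$ an $\F_\tau$-measurable unit vector (optionality of such $\theta$ is immediate from $E[a\cdot g_\tau]=E[a\cdot\op g_\tau]$), and whose ``$\le$'' direction is where the optional section theorem enters: writing $\theta=\phi\,d\|\theta\|$ and passing to the sub-probability measure induced by $\|\theta\|$ on the optional $\sigma$-algebra, one represents $E\int v\,d\theta$ as a barycenter of the graph measures of stopping times, which bounds it by $\sup_\tau E|v_\tau|$. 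This is essentially the content of Bismut~\cite{bis78}, and I would cite \cite{pp15a} for the random-measure formulation. Granting it, $\R^1$ is isometric to a quotient of the Banach space $L^1(C)$, hence is itself a Banach space with the stated norm.

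Duality then follows from the quotient structure: the dual of $L^1(C)/N$ is the annihilator $N^\perp\subseteq L^\infty(M)$, equipped with the inherited norm $\esssup\|\theta\|_{TV}$, so it remains to identify $N^\perp$ with $\M^\infty$. The inclusion $\M^\infty\subseteq N^\perp$ is immediate, since $\op g=0$ forces $E\int g\,d\theta=E\int\op g\,d\theta=0$ for optional $\theta$. For the reverse inclusion I would avoid testing directly against the non-continuous process $g-\op g$ and instead use the dual optional projection of random measures: given $\theta_0\in L^\infty(M)$ there is a unique $\theta\in\M^\infty$ with $E\int g\,d\theta=E\int g\,d\theta_0$ for all $g\in L^1(C)$ and $\esssup\|\theta\|_{TV}\le\esssup\|\theta_0\|_{TV}$. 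Applying this to the measure representing the functional $g\mapsto\ell(\op g)$ on $L^1(C)$, for an arbitrary $\ell\in(\R^1)^*$, produces an optional representative and hence surjectivity of $\M^\infty\to(\R^1)^*$, while separation (injectivity) is clear from the contraction estimate. Combining these, $(\R^1)^*=\M^\infty$ with dual norm $\esssup\|\theta\|_{TV}$, and the Dirac-at-stopping-time test measures exhibited above confirm directly that $\esssup\|\theta\|_{TV}$ is the norm on $\M^\infty$ dual to $\|\cdot\|_{\R^1}$, closing the argument.
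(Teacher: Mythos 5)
The paper offers no proof of this theorem: it is quoted from \cite{pp15a} with the remark that it is essentially due to Bismut \cite{bis78}, so there is nothing in the text to compare against line by line. Your architecture --- the Bochner duality $(L^1(C))^*=L^\infty(M)$, the optional projection as a contraction of $L^1(C)$ onto $\R^1$, the identification $\R^1\cong L^1(C)/N$ with $N=\{g\in L^1(C)\mid \op{g}=0\}$, and the time--change/section--theorem bound $|E\int v\,d\theta|\le\sup_{\tau\in\T}E|v_\tau|\cdot\esssup\|\theta\|_{TV}$ for optional $\theta$ --- is indeed the route of the cited sources, and the easy halves (the contraction estimate, the inclusion $\M^\infty\subseteq N^\perp$, testing with $\theta=a\delta_\tau$ for $\F_\tau$-measurable $a$) are correct.

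The gap is in the surjectivity step $(\R^1)^*\subseteq\M^\infty$, equivalently $N^\perp\subseteq\M^\infty$, on which everything else depends: your quotient-norm estimate also uses it implicitly, since $\|[g]\|_{L^1(C)/N}=\sup\{|\langle g,\theta\rangle|\mid\theta\in N^\perp,\ \esssup\|\theta\|_{TV}\le1\}$ can only be bounded by $\sup_\tau E|\op{g}_\tau|$ once you know every $\theta\in N^\perp$ is optional, so the order of your argument is circular as written. More seriously, the property you attribute to the dual optional projection --- a unique $\theta\in\M^\infty$ with $E\int g\,d\theta=E\int g\,d\theta_0$ for \emph{all} $g\in L^1(C)$ --- is false: since $L^1(C)$ separates points of its dual $L^\infty(M)$, it would force $\theta=\theta_0$ and hence that every essentially bounded random measure is optional. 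The correct defining property is $E\int g\,d\theta=E\int \op{g}\,d\theta_0$. With that correction, projecting the measure $\theta_0$ that represents $g\mapsto\ell(\op{g})$ yields an optional $\theta$ representing $g\mapsto E\int\op{g}\,d\theta_0$, and to conclude that $\theta$ represents $\ell$ you still need $E\int(g-\op{g})\,d\theta_0=0$; this does not follow from $\theta_0\in N^\perp$, because $g-\op{g}$ is not continuous and hence not an element of $N$. Equivalently, you must show that any $\eta\in N^\perp$ whose dual optional projection vanishes is zero. That implication is precisely the nontrivial content of Bismut's theorem and requires a genuine approximation or section-theorem argument (or, dually, the construction of near-optimal continuous lifts of regular processes); your sketch does not supply it, and until it is filled in neither the isometry $\R^1\cong L^1(C)/N$ nor the identification of the dual is established.
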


Our main result, Theorem~\ref{thm:icr} below, involves regular processes which are also quasimartingales. Recall that a process $v$ is a {\em quasimartingale} if it is adapted, right continuous, has $E|v_t|<\infty$ for all $t$ and
\[
\Var(v):=\sup_\pi\left\{E\left[\sum_{i\in\pi} {|E\left [v_{t_{i+1}}-v_{t_i}\midb \F_{t_i}\right]|}\right]\right\}<\infty,
\]
where the supremum is taken over all finite partitions $\pi$ of $[0,T]$. The number $\Var(v)$ is known as the {\em mean variation} of $v$. Theorem~\ref{thm:qmart} below says in particular that quasimartingales are the optional projections of {\em IV processes}, i.e.\ (not necessarily) adapted processes whose initial value as well as pathwise total variation are integrable.

A process is said to be of {\em class $(D)$} if the set $\{v_\tau\,|\,\tau\in\T\}$ is uniformly integrable. In particular, regular processes are of class $(D)$. The following theorem, where $\R^1_m$ denotes the linear space of c\'adl\'ag martingales, summarizes some basic properties of quasimartingales.

\begin{theorem}\label{thm:qmart}
A c\'adl\'ag process $v$ is a quasimartingale of class $(D)$ if and only if $v=m+a$ for an $m\in\R^1_m$ and a predictable process $a$ of integrable variation with $a_0=0$. The decomposition is unique. One then has $\Var(v)=E\|Da\|_{TV}$ and
\[
E\left[\int_{[0,T]} vdx\right]=E\left[v_T\cdot x_{T+}-v_0\cdot x_0-\int_{[0,T]} x da\right]
\]
for every $x\in\N^\infty$. Quasimartingales of class $(D)$ are the optional projections of IV processes.
\end{theorem}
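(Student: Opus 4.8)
The plan is to build the decomposition from two classical ingredients and then read off the remaining claims. For the ``only if'' direction, suppose $v$ is a c\'adl\'ag quasimartingale of class $(D)$. First I would invoke Rao's theorem (available through \cite{dm82}) to write $v=Y-Z$ as a difference of two supermartingales, the class $(D)$ property being inherited by $Y$ and $Z$. Applying the Doob--Meyer decomposition to each class $(D)$ supermartingale gives $Y=m_Y-A_Y$ and $Z=m_Z-A_Z$ with $m_Y,m_Z$ uniformly integrable (hence $\R^1_m$) martingales and $A_Y,A_Z$ predictable increasing processes of integrable variation, null at $0$. Setting $m:=m_Y-m_Z\in\R^1_m$ and $a:=A_Z-A_Y$ yields the asserted $v=m+a$ with $a$ predictable of integrable variation and $a_0=0$. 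For the ``if'' direction one checks, exactly as in the mean-variation computation below, that any such $v=m+a$ is a quasimartingale (martingale increments have vanishing conditional mean) of class $(D)$ (since $m$ is uniformly integrable and $|a_\tau|\le\|Da\|_{TV}\in L^1$ dominates, forcing uniform integrability of $\{v_\tau\}$).

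Uniqueness and the mean-variation identity come next. If $v=m+a=m'+a'$ are two such decompositions, then $m-m'=a'-a$ is simultaneously a uniformly integrable martingale and a predictable process of finite variation with $(m-m')_0=0$; since a predictable finite-variation martingale is constant, $m=m'$ and $a=a'$. For $\Var(v)=E\|Da\|_{TV}$ I would use that the martingale part drops out of every conditional increment, so $E[v_{t_{i+1}}-v_{t_i}\mid\F_{t_i}]=E[a_{t_{i+1}}-a_{t_i}\mid\F_{t_i}]$. The bound $\Var(v)\le E\|Da\|_{TV}$ is then immediate from conditional Jensen and the telescoping of $|Da|$ along the partition. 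The reverse inequality is where predictability is essential: inserting the $\F_{t_i}$-measurable signs of the conditional increments identifies $\Var(v)$ with the supremum of $E\int\xi\,d(Da)$ over simple predictable $\xi$ with $|\xi|\le1$; since the predictable density $\zeta=d(Da)/d|Da|$ can be approximated in $L^1(|Da|\otimes P)$ by such simple predictable processes, this supremum equals $E\int\zeta\,d(Da)=E\|Da\|_{TV}$.

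For the integration-by-parts formula I would pass to the c\'adl\'ag modification $X:=x_+$, a predictable finite-variation process with $X_-=x$, and apply the semimartingale integration by parts to $X$ and $v$. Using that $X$ is of finite variation (so $[X,v]=\int\Delta v\,dX$) and reconciling the atom of $Dx$ at $0$ with the boundary term, this collapses to the pathwise identity $\int_{[0,T]}v\,dx+\int_{(0,T]}x\,dv=v_T\cdot x_{T+}-v_0\cdot x_0$. Taking expectations and splitting $dv=dm+da$, the term $E\int x\,dm$ vanishes because $x$ is bounded and predictable and $m\in\R^1_m$, so $\int x\,dm$ is a true uniformly integrable martingale null at $0$; what survives is exactly $-E\int x\,da$, giving the claim. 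Finally, the optional-projection representation is read off from the decomposition: the constant-in-time process $w\equiv m_T$ is IV with $\op w=m$, while $a$ is predictable, so $\op a=a$ and $a$ is itself IV; by linearity $v=\op(w+a)$ with $w+a$ an IV process. The converse inclusion follows by computing, just as above, that for any IV process $w$ its optional projection (in a c\'adl\'ag version) is of class $(D)$ and has mean variation at most $E\|Dw\|_{TV}$.

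The main obstacle I anticipate is twofold. The genuinely delicate analytic point is the lower bound in $\Var(v)=E\|Da\|_{TV}$: one must verify that restricting the test integrands to the special ``sign-of-conditional-increment'' step processes still recovers the full variation, which is precisely where predictability of $a$ (and hence of the sign $\zeta$) cannot be dispensed with. The second, more bookkeeping-heavy, difficulty is pinning down the boundary and jump conventions in the integration-by-parts identity---matching $x$ against $x_+$, handling the atom of $Dx$ at $0$, and the ranges $[0,T]$ versus $(0,T]$---and then justifying rigorously that $E\int x\,dm$ vanishes under only the $L^1$/class $(D)$ integrability available for $m$.
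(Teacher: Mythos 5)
Your proposal is correct in substance and follows essentially the same route as the paper; the difference is only that the paper outsources the first two claims (existence and uniqueness of the decomposition $v=m+a$ and the identity $\Var(v)=E\|Da\|_{TV}$) to Sections 3--4 of \cite[Appendix~2]{dm82}, whereas you unfold that citation via Rao's theorem plus Doob--Meyer, uniqueness via the constancy of predictable finite-variation martingales, and the density of simple predictable integrands in $L^1$ of the Dol\'eans measure for the lower bound on the mean variation. The integration-by-parts and optional-projection arguments coincide with the paper's (your $w\equiv m_T$ is the paper's $\one_{[0,T]}m_T$). The one step whose justification is not valid as written is the claim that $\int x\,dm$ is a true uniformly integrable martingale because $x$ is bounded and predictable and $m\in\R^1_m$: the stochastic integral of a bounded predictable process against a uniformly integrable martingale is in general only a local martingale (one needs $m\in\H^1$, not mere uniform integrability, for the usual argument). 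You flag this yourself, and the conclusion $E\int_{(0,T]}x\,dm=0$ is nevertheless correct and easy to repair in this setting: since $x$ has bounded variation, integrate by parts pathwise to get $\int_{(0,T]}x\,dm=x_{T+}m_T-x_0m_0-\int_{[0,T]}m\,dx$, then use $m=\op{(\one_{[0,T]}m_T)}$ together with the optionality of $Dx$ to replace $m$ by $m_T$ in the last integral, which reduces everything to $E[x_0(E[m_T\mid\F_0]-m_0)]=0$. With that substitution your argument is complete.
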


\begin{proof}
The first two claims are given Sections~3 and 4 of \cite[Appendix~2]{dm82}. The integration by parts formula follows from the first claim and the integration by parts formula for semimartingales (recalling that $x\in\N^\infty$ is left-continuous). 
If $v=m+a$, then it is the optional projection of the IV process $\one_{[0,T]} m_T+a$. Conversely, given an IV process $b$, we see from the definitions of optional projection and mean variation that $\op b$ has a finite mean variation. Moreover, the values of an IV process $b$ are bounded by $b_0+\|Db\|_{TV}$ so $b$ as well as $\op b$ are of class $(D)$.
\end{proof}

We will denote the linear space of regular quasimartingales by $\R^1_{qm}$.

\begin{corollary}\label{cor:qmart}
On $\R^1$, we have $\Var=\sigma_\D$, where 
\[
\D=\{Dx\in\M^\infty \mid x\in\N^\infty,\ |x|\le 1,\ x_0=x_{T+}=0\}.
\]
We have $v\in\R^1_{qm}$ if and only if $v=m+a$ for an $m\in\R^1_m$ and a predictable continuous process $a$ of integrable variation with $a_0=0$. Regular quasimartingales are the optional projections of continuous IV process.
\end{corollary}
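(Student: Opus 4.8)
The plan is to prove the three assertions in turn, relying on the decomposition and integration‑by‑parts formula of Theorem~\ref{thm:qmart} together with the duality of Theorem~\ref{thm:banach}.

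For the identity $\Var=\sigma_\D$ on $\R^1$, I read the support function through the pairing of Theorem~\ref{thm:banach} as
\[
\sigma_\D(v)=\sup_{\theta\in\D}\langle v,\theta\rangle_{\R^1}=\sup\set{E\textstyle\int v\,d(Dx)}{x\in\N^\infty,\ |x|\le 1,\ x_0=x_{T+}=0},
\]
and establish two inequalities. For ``$\le$'', I fix a partition $\pi=\{0=t_0<\dots<t_n=T\}$ and use $|E[v_{t_{i+1}}-v_{t_i}\mid\F_{t_i}]|=\sup\{\xi_i\cdot E[v_{t_{i+1}}-v_{t_i}\mid\F_{t_i}]\mid|\xi_i|\le 1,\ \xi_i\in\F_{t_i}\}$; the tower property rewrites the defining sum as $E\sum_i\xi_i\cdot(v_{t_{i+1}}-v_{t_i})$. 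The step process $x=-\sum_i\xi_i\one_{(t_i,t_{i+1}]}$ is left‑continuous and adapted (on $(t_i,t_{i+1}]$ it equals the $\F_{t_i}$‑measurable $-\xi_i$), lies in $\N^\infty$ with $|x|\le 1$ and $x_0=x_{T+}=0$, and has $Dx=\sum_i\xi_i(\delta_{t_{i+1}}-\delta_{t_i})$, so $E\int v\,d(Dx)=E\sum_i\xi_i\cdot(v_{t_{i+1}}-v_{t_i})\le\sigma_\D(v)$; taking suprema over $\xi$ and $\pi$ gives $\Var(v)\le\sigma_\D(v)$. For ``$\ge$'', this already forces $\sigma_\D(v)=+\infty$ whenever $\Var(v)=+\infty$, so I may assume $\Var(v)<\infty$; then $v$, being regular and hence c\'adl\'ag of class $(D)$, is a quasimartingale of class $(D)$, and the integration‑by‑parts formula of Theorem~\ref{thm:qmart} gives $E\int v\,d(Dx)=-E\int x\,da$ for admissible $x$, which $|x|\le 1$ bounds by $E\|Da\|_{TV}=\Var(v)$.

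For the characterization of $\R^1_{qm}$, I start from the decomposition $v=m+a$ of Theorem~\ref{thm:qmart}, available since a regular quasimartingale is of class $(D)$, and ask exactly when $v$ is regular. A c\'adl\'ag process of class $(D)$ is regular if and only if $\pp v=v_-$, i.e.\ it has no predictable jumps; see \cite{bis78}. Since $\pp m=m_-$ for the uniformly integrable martingale $m$ and $\pp a=a$ for the predictable $a$, one computes $\pp v-v_-=a-a_-=\Delta a$, so $v$ is regular precisely when $a$ is continuous, which yields both implications at once. For the representation by optional projections, the proof of Theorem~\ref{thm:qmart} already exhibits $v=m+a$ as the optional projection of the IV process $b=\one_{[0,T]}m_T+a$; when $a$ is continuous this $b$ is a continuous IV process, giving one inclusion. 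Conversely a continuous IV process $b$ obeys $\sup_t|b_t|\le|b_0|+\|Db\|_{TV}\in L^1$, so $b\in L^1(C)$ and $\op b$ is regular, while Theorem~\ref{thm:qmart} makes $\op b$ a quasimartingale of class $(D)$; hence $\op b\in\R^1_{qm}$.

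The step I expect to be the main obstacle is the ``$\le$'' half of the first identity: one must verify carefully that the process built from the optimizing $\xi_i$ is genuinely adapted and left‑continuous, that it meets the endpoint constraints $x_0=x_{T+}=0$ built into $\D$, and that its differential measure $Dx$ reproduces the telescoping increments $\delta_{t_{i+1}}-\delta_{t_i}$ so that $E\int v\,d(Dx)$ equals the partition sum. The second assertion instead rests on the characterization of regularity through $\pp v=v_-$, which is the key external input but is standard.
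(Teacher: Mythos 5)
Your proof is correct and follows essentially the same route as the paper: the inequality $\sigma_\D\le\Var$ via the integration-by-parts formula and $\Var(v)=E\|Da\|_{TV}$ from Theorem~\ref{thm:qmart}, and continuity of $a$ via the predictable-projection identities ($\pp m=m_-$, $\pp a=a$), which is exactly the paper's argument applied to $v$ rather than to $a=v-m$ directly. The only differences are that you prove $\Var\le\sigma_\D$ from scratch with the step-process construction (the paper simply cites (3.4) of \cite[Appendix~II]{dm82}, of which your argument is in effect the proof) and that you spell out the converse direction and the continuous-IV-projection claim, which the paper leaves largely implicit; both additions check out.
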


\begin{proof}
By (3.4) in \cite[Appendix~II]{dm82}, $\Var\le\sigma_{\D}$. The opposite inequality follows from the expression $\Var(v)=E\|Da\|_{TV}$ and the integration by parts formula in Theorem~\ref{thm:qmart}. Given $v\in\R^1_{qm}$ and its decomposition in Theorem~\ref{thm:qmart}, we have $a=v-m\in\R^1$ since $\R^1_m\subset\R^1_{qm}$. Thus, by \cite[Remark VI.50(d)]{dm82}, $\pp a=a_-$ while $\pp a=a$ since $a$ is predictable. Hence, $a$ is continuous.
\end{proof}

\subsection{Duality}\label{ssec:duality}

By Theorem~\ref{thm:banach}, $\N^\infty$ may be identified with the Banach dual of the space $\V^1:=\reals^d\times\R^1$ under the bilinear form
\[
\langle x,v\rangle_{\N^\infty} = E\left[x_0\cdot v_{-\infty} + \int vdx\right],
\]
where we regard elements of $\V^1$ as regular processes  on $\{-\infty\}\cup[0,T]$. Our main result, Theorem~\ref{thm:ifn} below, gives an explicit expression for the conjugate of an integral functional on $\N^\infty$ with respect to the above pairing. 




Let $h$ be as in Section~\ref{ssec:icr} and define
\[
f(x,\omega) = I_h(x,\omega) + k_0(x_0,\omega) + k_T(x_{T+},\omega),
\]
where $k_0$ and $k_T$ are convex normal $\F_0$- and $\F_T$-integrands on $\reals^d$, respectively. Here all the three terms define normal integrands, so $f$ is a normal integrand as well; see \cite[Lemma~24]{pp15a}. We assume throughout that $Ek_0$ and $Ek_T$ are proper on $L^\infty(\reals^d)$ that $Ek_0^*$ and $Ek_T^*$ are proper on $L^1(\reals^d)$.

Given $v\in\R^1_{qm}$ and its unique decomposition $v=m+a$ in Corollary~\ref{cor:qmart}, we denote $Dv:=Da$ and continue to use the notation $dv/d\mu:=d(Dv)^a/d\mu$ and $dv/d|Dv^s|:=d(Dv)^s/d|(Dv)^s|$ from Section~\ref{ssec:dualityd}. We also define $\V^1_{qm}:=\reals^d\times\R^1_{qm}$.

\begin{theorem}\label{thm:icr}
Let $h$ satisfy the assumptions of Theorem~\ref{thm:ifn}. Then $Ef:\N^\infty\to\ereals$ is closed and its conjugate can be expressed as
\[
(Ef)^*(v)=
\begin{cases}
E\left[J_{h^*}(-Dv)+k_0^*(v_{-\infty}-v_0)+k^*_T(v_T)\right] & \text{if $v\in\V^1_{qm}$},\\
+\infty & \text{otherwise.}
\end{cases}
\]
Moreover, $v\in\partial Ef(x)$ if and only if
\begin{align*}
-dv/d\mu &\in \partial h(x)\quad \mu\text{-a.e.},\\
-dv/d|Dv^s| &\in \partial^s h(x)\quad |(Dv)^s|\text{-a.e.},\\
v_{-\infty}-v_0 &\in\partial k_0(x_0),\\
v_{T} &\in\partial k_T(x_{T+})
\end{align*}
almost surely.
\end{theorem}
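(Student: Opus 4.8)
The plan is to derive Theorem~\ref{thm:icr} by combining the deterministic duality result of Theorem~\ref{thm:conjugated} with the stochastic interchange rule Theorem~\ref{thm:ifn} and the Banach-space identifications of Section~\ref{ssec:banach}. The basic strategy is to compute the conjugate directly from its definition, reducing the supremum over $\N^\infty$ to a pathwise problem through the expectation–minimization interchange rule \cite[Theorem~1]{pp15a}, and then invoking the pathwise conjugate formula already established in Theorem~\ref{thm:conjugated}.

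\medskip

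First I would fix $v\in\V^1$ and write $(Ef)^*(v)=\sup_{x\in\N^\infty}\{\langle x,v\rangle_{\N^\infty}-Ef(x)\}$. The integration-by-parts formula of Theorem~\ref{thm:qmart} rewrites the pairing $\langle x,v\rangle_{\N^\infty}$ for $v\in\V^1_{qm}$ in terms of $Dv=Da$ and the boundary values $v_0$, $v_T$, $v_{-\infty}$, which is exactly the form of the deterministic pairing $\langle v,x\rangle$ from Section~\ref{ssec:dualityd}. Consequently, for each $\omega$ the integrand of the supremum becomes the deterministic conjugate problem whose value is $f^*(v(\omega),\omega)$, with $f^*$ given by Theorem~\ref{thm:conjugated}. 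The key analytic step is to pass the supremum inside the expectation: since $f$ is a normal integrand (Lemma~\ref{lem:mif} and \cite[Lemma~24]{pp15a}) and $Ef$ is proper under our hypotheses, the interchange rule \cite[Theorem~1]{pp15a} — combined pathwise with Theorem~\ref{thm:ifn}, whose hypotheses $h$ is assumed to satisfy — yields
\[
(Ef)^*(v)=E\left[\,\sup_{x\in X}\left\{\langle v,x\rangle-f(x,\cdot)\right\}\right]=E\left[f^*(v,\cdot)\right],
\]
and substituting the pathwise formula from Theorem~\ref{thm:conjugated} gives the stated expression on $\V^1_{qm}$. The closedness of $Ef$ follows because $Ef$ is the conjugate of a proper convex functional, hence weak$^*$-lower semicontinuous on the dual $\N^\infty$.

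\medskip

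The subdifferential characterization is then obtained from the conjugate by the standard biconjugation criterion: $v\in\partial Ef(x)$ iff $Ef(x)+(Ef)^*(v)=\langle x,v\rangle_{\N^\infty}$. Using the integration-by-parts decomposition of the pairing and the additive structure of both $Ef$ and $(Ef)^*$ into the $I_h$-term and the two boundary terms $k_0$, $k_T$, the equality splits into three separate Fenchel equalities. The boundary conditions $v_{-\infty}-v_0\in\partial k_0(x_0)$ and $v_T\in\partial k_T(x_{T+})$ come directly from the Fenchel equality for $k_0$ and $k_T$, while the two measure-theoretic conditions $-dv/d\mu\in\partial h(x)$ $\mu$-a.e.\ and $-dv/d|Dv^s|\in\partial^s h(x)$ $|(Dv)^s|$-a.e.\ come from the pathwise subdifferential formula of Theorem~\ref{thm:conjugated} applied $P$-almost surely. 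I would verify the pointwise equalities hold on a set of full $P$-measure using the properness of $Ek_0^*$, $Ek_T^*$ on $L^1$ and of $Ek_0$, $Ek_T$ on $L^\infty$, which guarantees the relevant integrands are integrable so that the expectation of a nonnegative Fenchel gap vanishes iff the gap vanishes a.s.

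\medskip

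I expect the main obstacle to be the justification of restricting attention to $v\in\V^1_{qm}$ and showing $(Ef)^*(v)=+\infty$ off this set — equivalently, that the pathwise conjugate can only be finite when $v$ is a regular quasimartingale with an absolutely-continuous predictable part. This is precisely where the $(D)$-class structure and the decomposition $v=m+a$ from Corollary~\ref{cor:qmart} must be used: the deterministic formula requires $v\in V_{BV}$ pathwise, and lifting this to the statement that finiteness of $(Ef)^*$ forces $v\in\R^1_{qm}$ relies on Theorem~\ref{thm:qmart} identifying quasimartingales as optional projections of IV processes together with the continuity of $a$ established in Corollary~\ref{cor:qmart}. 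Handling the measurability and integrability bookkeeping in the interchange step — ensuring the pathwise suprema assemble into a genuine normal integrand so that \cite[Theorem~1]{pp15a} applies — will require care but is expected to be routine given the groundwork already laid in Section~\ref{ssec:icr}.
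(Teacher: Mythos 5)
Your outline (integration by parts, reduction to a deterministic conjugate, interchange of expectation and supremum, Fenchel equalities for the subdifferential) has the same shape as the paper's argument, but the central step as you state it is false, and the two places you flag as ``obstacles'' are exactly where the paper's proof does its real work. The identity $(Ef)^*(v)=E[f^*(v,\cdot)]$ with $f^*$ taken from Theorem~\ref{thm:conjugated} cannot hold: that theorem gives $f^*(v(\omega),\omega)=+\infty$ unless $v(\omega)\in V_{BV}$, and a quasimartingale $v=m+a$ has paths of unbounded variation whenever its martingale part is nontrivial, so the right-hand side would be $+\infty$ on essentially all of $\V^1_{qm}$. Finiteness of $(Ef)^*$ there is a genuinely stochastic phenomenon: the supremum runs over \emph{adapted} $x$, against which the martingale part of $v$ contributes only boundary terms --- this is what the integration-by-parts formula of Theorem~\ref{thm:qmart}, with the convention $Dv:=Da$, encodes. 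Relatedly, \cite[Theorem~1]{pp15a} interchanges $E$ with an infimum over \emph{all} measurable selections, not over adapted ones; passing from the supremum over $\N^\infty$ to a pathwise supremum is precisely the content of Theorem~\ref{thm:ifn}, and that theorem applies to functionals of the form $E\int h(x)d\mu$, not to $E[\langle v,x\rangle-f(x)]$. The paper bridges this by absorbing the linear term and the part of $Dv$ singular to $\mu$ into a modified integrand $\bar h$ relative to the measure $\bar\mu=|(Dv)^s|+\mu$ (splitting along a predictable set $A$ carrying $\mu$ versus $|(Dv)^s|$, and using the condition $x\in\dom I_h\Rightarrow x_t\in\cl\dom h_t$ to replace $h$ by $\delta_{\cl\dom h}$ on the singular set), and then verifies the hypotheses of Theorem~\ref{thm:ifn} for $\bar h$ and $\bar\mu$. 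Your proposal contains no substitute for this construction, which is the heart of the proof.

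The necessity claim --- that $(Ef)^*(v)=+\infty$ unless $v\in\V^1_{qm}$ --- which you defer as the ``main obstacle,'' is settled in the paper by an explicit lower bound rather than by the optional-projection characterization you point to: the hypotheses of Theorem~\ref{thm:ifn} yield $\alpha$ and $r>0$ with $Ef(\bar x+x)\le\alpha$ for all adapted perturbations satisfying $|x_t|\le r$ and $x_0=x_{T+}=0$, whence $(Ef)^*(v)\ge\langle\bar x,v\rangle-\alpha+r\sigma_\D(v)=\langle\bar x,v\rangle-\alpha+r\Var(v)$ by Corollary~\ref{cor:qmart}, forcing $\Var(v)<\infty$ on $\dom(Ef)^*$. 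Your treatment of the boundary terms and of the subdifferential characterization via Fenchel gaps is consistent with the paper, but until the interchange step and the necessity bound are actually carried out, the proposal is an outline rather than a proof.
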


\begin{proof} 
By the assumptions of Theorem~\ref{thm:ifn}, there exist $\alpha$ and $r>0$ such that $Ef(\bar x+x)\le\alpha$ for all $x\in\N^\infty$ with $x_0=x_{T+}=0$ and $|x_t(\omega)|\le r$ a.s.e. Thus,
\begin{align*}
(Ef)^*(v)&=\sup_{x\in\N^\infty}\{\langle x,v\rangle-Ef(x)\}\\
&\ge\sup_{x\in\N^\infty}\left\{\langle \bar x+ x,v\rangle-Ef(\bar x+x)\midb |x_t|\le r,\ x_0=x_{T+}=0\right\}\\
&\ge\sup_{x\in\N^\infty}\left\{\langle \bar x+ x,v\rangle-\alpha\midb |x_t|\le r,\ x_0=x_{T+}=0\right\}\\
&\ge \langle \bar x,v\rangle -\alpha + \sup_{x\in\N^\infty}\left\{\langle Dx,v\rangle\midb |x_t|\le r,\ x_0=x_{T+}=0\right\}\\
&= \langle \bar x,v\rangle -\alpha + r\Var(v),
\end{align*}
where the last equality follows from Corollary~\ref{cor:qmart}. When $\Var(v)<\infty$, the integration by parts formula in Theorem~\ref{thm:qmart} gives
\begin{align*}
(Ef)^*(v)&=\sup_{x\in\N^\infty}E[\langle v,x\rangle - f(x)]\\
&=\sup_{x\in\N^\infty}E\left[v_T\cdot x_{T+}+(v_{-\infty}-v_0)\cdot x_0-\int xda - f(x)\right]\\
&=\sup_{x\in\N^\infty}E\left[-\int  xda-I_h(x)\right]+Ek_0^*(v_{-\infty}-v_0)+Ek^*_T(v_T).
\end{align*}
Here the last equality follows by first noting that $a$ and $\mu$ do not have atoms at the origin and that $Ek_0$ and $Ek_T$ are proper so that we may take expectation separately from each term, and then by applying the interchange rule for minimization and expectation \cite[Theorem 14.60]{rw98}. It thus suffices to show that
\begin{align*}
\sup_{x\in \N^\infty} E\left[-\int xda - I_h(x)\right]=E J_{h^*}(-Dv).
\end{align*}

Using \cite[Theorem 5.15]{hwy92}, there exists a predictable set $A$ such that $E\int 1_{A^C} (dv/d\mu) d\mu=E\int 1_{A} d|(Dv)^s|=0$. Defining $\bar\mu=|(Dv)^s|+\mu$ and 
\[
\bar h_t(x,\omega)=\begin{cases}
 h_t(x,\omega) + x\cdot (dv/d\mu)_t(\omega)\quad&\text{if }(\omega,t)\in A,\\
 \delta_{\cl\dom h_t(\omega)} (x) + x\cdot (dv/d|Dv^s|)_t(\omega)\quad&\text{otherwise},
\end{cases}
\]
we have, by the last assumption in Theorem~\ref{thm:BV}, that
\[
E\left[I_h(x)+\int xda \right]=E\int \bar h(x)d\bar\mu.
\]
We have
\[
\inf_{x\in\reals^d}\bar h_t(x,\omega)=\begin{cases}
 -h^*_t(-(dv/d\mu)_t(\omega),\omega)\quad&\text{if }(\omega,t)\in A,\\
 - (h_t^*)^\infty(-(dv/d|Dv^s|)_t(\omega)\quad&\text{otherwise}
\end{cases}
\]
while it is straight-forward to verify the assumptions in Theorem~\ref{thm:ifn}, so 
\[
\inf_{x\in\N^\infty}E\int \bar h(x)d\bar\mu = E\int \inf_{x\in\reals^d}\bar h(x)d\bar\mu =-J_{h^*}(-Dv).
\]

To prove the subgradient formula, let $x\in\dom Ef$ and $v\in\V^1_{qm}$. By Fenchel inequality,
\begin{align*}
h(v)+h^*(-dv/d\mu)&\ge -x\cdot(dv/d\mu)\quad\mu\text{-a.e.,}\\
(h^*)^\infty(-dv/d|Dv^s|) &\ge -x\cdot (dv/d|Dv^s|)\quad|(Dv)^s|\text{-a.e.,}\\
k_0(x_0)+k^*_0(v_{-\infty}-v_0)  &\ge x_0\cdot (v_{-\infty}-v_0),\\
k_T(x_{T+})+k_T^*(v_T) &\ge x_{T+}\cdot v_T
\end{align*}
almost surely. By the definition of a subgradient, $v\in\partial Ef(x)$ if and only if $Ef(x)+(Ef)^*(v)=\langle v,x\rangle$. Using the first part of the theorem and the integration parts formula in Theorem~\ref{thm:qmart}, we see that this is equivalent to having the above inequalities satisfied as equalities which in turn is equivalent to the stated pointwise subdifferential conditions.
\end{proof}

We say that a normal integrand $h$ is {\em integrable} if $h(x)\in \LL^1$ for every constant process $x$. For real-valued and integrable $h$, Theorem~\ref{thm:ifn} takes a simpler form.

\begin{corollary}\label{cor:inth}
If $h$ is real-valued and integrable, then the conjugate of $Ef:\N^\infty\to\ereals$ can be expressed as
\[
(Ef)^*(v)=
\begin{cases}
E\left[J_{h^*}(-Dv)+k_0^*(v_{-\infty}-v_0)+k^*_T(v_T)\right] & \text{if $v\in\V^1_{qm}$ and $Dv\ll\mu$},\\
+\infty & \text{otherwise}
\end{cases}
\]
and $v\in\partial Ef(x)$ if and only if $Dv\ll \mu$ and
\begin{align*}
-dv/d\mu &\in \partial h(x)\ \mu\text{-a.e.},\\
v_{-\infty}-v_0 &\in\partial k_0(x_0),\\
v_{T} &\in\partial k_T(x_{T+})
\end{align*}
almost surely.
\end{corollary}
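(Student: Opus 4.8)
The plan is to derive this directly from Theorem~\ref{thm:icr} by examining the consequences of $h$ being real-valued. If $h$ is real-valued then $\dom h_t=\reals^d$ a.s.e., so $\cl\dom h_t=\reals^d$; the left-isc hypothesis on $\dom h$ and the domain implications in Theorem~\ref{thm:BV} then hold trivially, and $\interior\LL^\infty(\dom h)=\LL^\infty$. The existence of an $\bar x\in\N^\infty$ in the interior is immediate (any constant process works), and the $\LL^1$ local bound required in Theorem~\ref{thm:ifn} follows by covering a ball around any $x\in\LL^\infty$ by the convex hull of finitely many constants and invoking convexity together with the integrability of $h$. Hence $h$ satisfies the hypotheses of Theorem~\ref{thm:ifn}, Theorem~\ref{thm:icr} applies and yields the conjugate and subdifferential in their general form, and it remains only to simplify the singular terms.

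The key observation is that, since $h_t$ is finite-valued, the recession function of its conjugate is
\[
(h_t^*)^\infty=\sigma_{\dom h_t}=\sigma_{\reals^d}=\delta_{\{0\}},
\]
using the standard convex-analytic identity that the recession function of a closed proper convex function is the support function of the domain of its conjugate (applied to $h_t^*$, whose biconjugate is $h_t$). First I would use this to rewrite the singular term in $J_{h^*}(-Dv)$. By the polar decomposition of the vector measure $(Dv)^s$, the density $-dv/d|Dv^s|$ is a unit vector $|(Dv)^s|$-a.e., so the integrand $(h_t^*)^\infty(-dv/d|Dv^s|)$ equals $+\infty$ on the support of $(Dv)^s$ whenever $(Dv)^s\ne 0$. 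Thus $J_{h^*}(-Dv)=+\infty$ unless $Dv\ll\mu$, while if $Dv\ll\mu$ the singular integral is taken over a set of zero $|(Dv)^s|$-measure and vanishes. This gives precisely the stated finiteness condition $v\in\V^1_{qm}$ and $Dv\ll\mu$, with $J_{h^*}(-Dv)=E\int h^*(-dv/d\mu)d\mu$.

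For the subdifferential, I would use $\partial^s h=\partial\delta_{\cl\dom h}=\partial\delta_{\reals^d}=\{0\}$. The singular subgradient condition $-dv/d|Dv^s|\in\partial^s h(x)=\{0\}$ is then, again by the unit norm of the polar density, satisfiable $|(Dv)^s|$-a.e.\ only when $(Dv)^s=0$, i.e.\ $Dv\ll\mu$; under $Dv\ll\mu$ it holds vacuously and the remaining three conditions of Theorem~\ref{thm:icr} survive unchanged. That $v\in\partial Ef(x)$ forces $Dv\ll\mu$ also follows from the conjugate formula, since $(Ef)^*(v)<\infty$ is necessary for $v$ to be a subgradient.

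The main obstacle I anticipate is not conceptual but bookkeeping: carefully justifying the pointwise identity $(h_t^*)^\infty=\delta_{\{0\}}$ in a measurable, a.s.e.\ sense for the normal integrand $h$, and confirming that the integrability hypothesis of Theorem~\ref{thm:ifn}, namely local domination of $h$ near any point of $\LL^\infty$ by an $\LL^1$ function, indeed follows from real-valuedness and integrability via the local boundedness of finite convex functions. Once these routine verifications are in place, the simplification of both the conjugate and the subdifferential is immediate.
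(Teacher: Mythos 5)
Your proposal is correct and follows essentially the same route as the paper: verify the hypotheses of Theorem~\ref{thm:icr} (the paper obtains the local $\LL^1$ domination by citing the proof of Rockafellar's interchange theorem, which is exactly your finite-convex-hull argument), then observe that $\dom h_t=\reals^d$ forces $\partial^s h_t=\{0\}$ and $(h_t^*)^\infty=\sigma_{\dom h_t}=\delta_{\{0\}}$, so the unit-norm polar density makes the singular terms enforce $Dv\ll\mu$. Your write-up simply makes explicit the simplifications the paper leaves implicit.
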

\begin{proof}
Inspection of the proof of \cite[Theorem 2]{roc71} reveals that, integrability of $h$ implies that for every $x\in \LL^\infty$ and $r>0$, there exists $\beta\in\LL^1$ such that 
\[
h(x+x')\le \beta\quad\forall x'\in\uball_r.
\]
Thus, since $\dom h_t=\reals^d$ for all $t$, $f$ satisfies the assumptions of Theorem~\ref{thm:icr} and $\partial^sh_t=\{0\}$ for all $t$ almost surely.
\end{proof}

In the opposite extreme where, instead of a real-valued function, $h$ is the indicator function of a random set, Theorem~\ref{thm:icr} takes the following form.

\begin{corollary}\label{cor:ifind}
Let $S$ be a predictable closed convex-valued mapping and
\[
f(x,\omega)= \begin{cases} 
k_0(x_0,\omega)+k_T(x_{T+},\omega)\quad&\text{if }x_t\in S_t(\omega)\ \forall t, \\
+\infty\quad&\text{otherwise}
\end{cases}
\]
Assume that there exists an $\bar x\in\dom Ef\cap\N^\infty$ with $\bar x\in\inte\LL^\infty(S)$. Then the conjugate of $Ef:\N^\infty\to\ereals$ can be expressed as
\[
(Ef)^*(v)=
\begin{cases}
E\left[\int \sigma_S(-d(Dv)/d|Dv|)d|Dv|+k_0^*(v_{-\infty}-v_0)+k^*_T(v_T)\right] & \text{if $v\in\V^1_{qm}$},\\
+\infty & \text{otherwise.}
\end{cases}
\]
Moreover, $v\in\partial Ef(x)$ if and only if
\begin{align*}
-d(Dv)/d|Dv| &\in N_S(x)\ |Dv|\text{-a.e.},\\
v_{-\infty}-v_0 &\in\partial k_0(x_0),\\
v_{T} &\in\partial k_T(x_{T+})
\end{align*}
almost surely.
\end{corollary}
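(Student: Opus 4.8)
The plan is to obtain this corollary as the special case of Theorem~\ref{thm:icr} in which $h$ is the indicator integrand $h_t(x,\omega)=\delta_{S_t(\omega)}(x)$, so that $f$ takes precisely the stated form. Since $S$ is predictable and closed convex-valued, $\delta_S$ is a predictable convex normal integrand, and it satisfies the lower bound \eqref{eq:lb} trivially with $v=0$ and $\alpha=0$. The convex-analytic objects attached to $\delta_S$ are exactly those appearing in the claim: its conjugate is the support function, $h^*=\sigma_S$, its subdifferential is the normal cone, $\partial h=N_S$, and because $\dom h=S$ is closed we also have $\partial^s h=\partial\delta_{\cl\dom h}=N_S$. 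The key structural fact is that $\sigma_S$ is positively homogeneous, whence its recession function satisfies $(h^*)^\infty=\sigma_S$; this is what will collapse the two integral terms defining $J_{h^*}$ into the single term in the statement.

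First I would check that $\delta_S$ meets the hypotheses of Theorem~\ref{thm:ifn}. The interior hypothesis there is, verbatim, the assumption $\bar x\in\inte\LL^\infty(S)$ made in the corollary, and the local boundedness condition holds trivially with $\beta=0$: if $x\in\interior\LL^\infty(S)$ then $x_t+x'\in S_t$, hence $h_t(x_t+x')=0$, for all sufficiently small constant shifts $x'$. It remains to verify that $\delta_S$ satisfies the pathwise assumptions of Theorem~\ref{thm:BV} almost surely, namely that $S$ is left-isc and that, for $x\in X$, membership $x\in\dom I_h$ (i.e.\ $x_t\in S_t$ $\mu$-a.e.) forces $x_t\in S_t$ for all $t$. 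The second implication follows from left-continuity of $t\mapsto S_t$ as noted after Theorem~\ref{thm:BV}, while the first is automatic since $\delta_S\ge0$ vanishes on $S$. I expect the main obstacle to be establishing the left-isc (and closed-graph) property of the paths of $S$ from predictability. I would handle this through a Castaing representation $S_t=\cl\{\xi^n_t\}$ by predictable selections that can be taken left-continuous: left-isc then follows because $\{t\mid S_t\cap A\neq\emptyset\}=\bigcup_n\{t\mid\xi^n_t\in A\}$ is $\tau$-open for every open $A$, and the closed graph is inherited from closedness of the $S_t$ through the same representation.

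With the hypotheses in place, Theorem~\ref{thm:icr} yields $(Ef)^*(v)=E[J_{h^*}(-Dv)+k_0^*(v_{-\infty}-v_0)+k_T^*(v_T)]$ on $\V^1_{qm}$, so it remains to simplify $J_{\sigma_S}$. Writing $\theta=-Dv$ with its $\mu$-decomposition $\theta=\theta^a+\theta^s$, positive homogeneity of $\sigma_S$ makes the sublinear integral $\int\sigma_S(d\theta/d\lambda)d\lambda$ independent of the reference measure $\lambda\gg\theta$; applying this to $\theta^a$ with $\lambda=\mu$ and then to $\theta$ with $\lambda=|Dv|$, and using that $|Dv|=|Dv^a|+|Dv^s|$ on mutually singular supports, gives
\[
J_{\sigma_S}(-Dv)=\int\sigma_S\!\left(\frac{d\theta^a}{d\mu}\right)d\mu+\int\sigma_S\!\left(\frac{d\theta^s}{d|\theta^s|}\right)d|\theta^s|=\int\sigma_S\!\left(-\frac{d(Dv)}{d|Dv|}\right)d|Dv|,
\]
which is the asserted conjugate.

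Finally, for the subdifferential I would feed $\partial h=\partial^s h=N_S$ into the criterion of Theorem~\ref{thm:icr}. Its first two conditions read $-dv/d\mu\in N_S(x)$ $\mu$-a.e.\ and $-dv/d|Dv^s|\in N_S(x)$ $|(Dv)^s|$-a.e. Because $N_S(x)$ is a cone, it is invariant under multiplication of the density by any positive Radon--Nikodym factor; hence each condition is equivalent to the corresponding statement for the density with respect to $|Dv|$, and on the mutually singular supports these merge into the single requirement $-d(Dv)/d|Dv|\in N_S(x)$ $|Dv|$-a.e. The remaining two conditions on $k_0$ and $k_T$ pass through unchanged, giving the stated characterization of $\partial Ef$.
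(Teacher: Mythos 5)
Your overall route is exactly the intended one: the corollary is the specialization of Theorem~\ref{thm:icr} to $h=\delta_S$ (the paper gives no separate proof), and your computations $h^*=\sigma_S$, $(h^*)^\infty=\sigma_S$ by positive homogeneity, $\partial h=\partial^sh=N_S$, together with the merging of the two terms of $J_{\sigma_S}$ into a single integral against $|Dv|$ (and likewise the two normal-cone conditions, using that $N_S(x)$ is a cone), are all correct.

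There is, however, one genuinely flawed step: your proposed derivation of the left-isc and closed-graph (left-continuity) properties of $t\mapsto S_t(\omega)$ from predictability of $S$. Predictable selections need \emph{not} be left-continuous: the predictable $\sigma$-algebra is \emph{generated} by left-continuous adapted processes, but predictable processes form a much larger class. For instance, $S_t=\{0\}$ for $t<s$ and $S_t=\{1\}$ for $t\ge s$ is predictable yet fails to be left-isc (the preimage of a small ball around $1$ is $[s,T]$, which is not $\tau$-open), so no Castaing representation by left-continuous selections can exist; and even when such a representation exists, a union of graphs of continuous selections yields inner semicontinuity but not closedness of the graph. The conditions of Theorem~\ref{thm:BV} on $\dom h=S$ (left-isc and the implication $x\in\dom I_h\Rightarrow x_t\in S_t$ for all $t$, e.g.\ via left-continuity of $t\mapsto S_t$) therefore cannot be deduced from the hypotheses as you state them; they have to be carried over as standing assumptions inherited from Theorem~\ref{thm:icr} (which requires $h$ to satisfy the assumptions of Theorem~\ref{thm:ifn}, hence of Theorem~\ref{thm:BV}, almost surely). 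Once you read the corollary as implicitly retaining those pathwise regularity assumptions on $S$ rather than trying to prove them from predictability, the rest of your argument goes through.
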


Note that when $S$ is the unit ball and $k_0=k_{T+}=\delta_{\{0\}}$, Corollary~\ref{cor:ifind} gives the expression $\Var=\sigma_\D$ from Corollary~\ref{cor:qmart}.

\begin{corollary}
Let $f(x,\omega) = k_0(x_0,\omega) + k_T(x_{T+},\omega)$. If $Ef:\N^\infty\to\ereals$ is proper, then
\[
(Ef)^*(v)=
\begin{cases}
E\left[k_0^*(v_{-\infty}-v_0)+k^*_T(v_{T})\right] & \text{if $v\in\R^1_m$},\\
+\infty & \text{otherwise.}
\end{cases}
\]
Moreover, $v\in\partial Ef$ if and only if $v\in\R^1_m$ and
\begin{align*}
v_{-\infty}-v_0 &\in\partial k_0(x_0),\\
v_{T} &\in\partial k_T(x_{T+})
\end{align*}
almost surely.
\end{corollary}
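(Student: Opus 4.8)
The plan is to obtain this corollary as the degenerate case $S_t(\omega)\equiv\reals^d$ of Corollary~\ref{cor:ifind}. Under this choice the indicator $\delta_{S_t}$ is identically zero, so the function appearing in Corollary~\ref{cor:ifind} is precisely $f(x,\omega)=k_0(x_0,\omega)+k_T(x_{T+},\omega)$, and the constant mapping $S\equiv\reals^d$ is trivially predictable, closed and convex-valued. Everything then comes down to verifying the hypotheses of Corollary~\ref{cor:ifind} and simplifying its two conclusions.

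First I would discharge the constraint qualification. Since $S\equiv\reals^d$ we have $\LL^\infty(S)=\LL^\infty$, so $\inte\LL^\infty(S)=\LL^\infty\supseteq\N^\infty$ (elements of $\N^\infty$ being in particular essentially bounded), and the required condition ``there exists $\bar x\in\dom Ef\cap\N^\infty$ with $\bar x\in\inte\LL^\infty(S)$'' collapses to $\dom Ef\ne\emptyset$. This is exactly the assumed properness of $Ef$ (with the standing properness of $Ek_0$ and $Ek_T$ ensuring $Ef>-\infty$). Hence Corollary~\ref{cor:ifind} applies verbatim.

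Finally I would simplify the resulting formulas. For $S\equiv\reals^d$ one has $\sigma_{\reals^d}=\delta_{\{0\}}$ and $N_{\reals^d}(x)=\{0\}$. The key observation is that the polar density $d(Dv)/d|Dv|$ of a vector measure relative to its total variation has unit Euclidean norm $|Dv|$-almost everywhere, hence never vanishes on a set of positive $|Dv|$-mass. Therefore the integral $\int\sigma_{\reals^d}(-d(Dv)/d|Dv|)d|Dv|$ is finite---and then equal to zero---exactly when $|Dv|=0$, and likewise the inclusion $-d(Dv)/d|Dv|\in N_{\reals^d}(x)$ forces $|Dv|=0$; in both cases $Dv=0$. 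For a regular quasimartingale $v=m+a$ with $a_0=0$ (Corollary~\ref{cor:qmart}), $Dv=Da=0$ makes $a$ constant and hence identically $0$, so $v=m\in\R^1_m$. Substituting $Dv=0$ and $v\in\R^1_m$ back into Corollary~\ref{cor:ifind} yields the stated conjugate $E[k_0^*(v_{-\infty}-v_0)+k_T^*(v_T)]$ on $\R^1_m$ (and $+\infty$ elsewhere) together with the stated subdifferential conditions. The proof is thus essentially a substitution; the only step needing care is this reduction to $Dv=0$, resting on the unit-modulus property of the polar density and on the vanishing of the continuous predictable part $a$ once its derivative is zero.
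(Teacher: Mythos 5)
Your proposal is correct and follows the route the paper evidently intends (the corollary is stated without proof, immediately after Corollary~\ref{cor:ifind}, as its degenerate case $S\equiv\reals^d$): the hypotheses reduce to properness of $Ef$, and since $\sigma_{\reals^d}=\delta_{\{0\}}$, $N_{\reals^d}(x)=\{0\}$ and the polar density $d(Dv)/d|Dv|$ has unit modulus $|Dv|$-a.e., finiteness of the conjugate (resp.\ the normal-cone condition) forces $Dv=Da=0$, hence $a\equiv a_0=0$ and $v\in\R^1_m$. No gaps.
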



The {\em Mackey} topology on a locally convex vector space $U$, is the convex topology generated by the level sets of support functions of weakly-compact sets on the dual space $Y$. It is denoted by $\tau(U,Y)$. By the Mackey--Arens theorem, $\tau(U,Y)$ is the strongest locally convex topology under which every continuous linear functional can be represented as $u\mapsto\langle u,y\rangle$ for some $y\in Y$. A version of Alaoglu's theorem states that if a convex function on $U$ is Mackey continuous, then the level sets of its conjugate are $\sigma(Y,U)$-compact; see \cite[Theorem~10b]{roc74}.

\begin{theorem}\label{thm:mackey}
If $h$, $k_0$ and $k_T$ are real-valued and integrable, then $Ef$ is Mackey-continuous and, in particular, $(Ef)^*$ has compact level-sets in $\V^1$.
\end{theorem}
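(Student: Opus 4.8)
The plan is to establish Mackey continuity of $Ef$ by decomposing it into its integral and boundary parts, each of which factors as a classically Mackey-continuous integral functional on an $L^\infty$-type space composed with a Mackey-continuous linear map into that space; the compactness of the sublevel sets of $(Ef)^*$ is then immediate from the cited Alaoglu-type result \cite[Theorem~10b]{roc74}. Write $Ef = EI_h + Ek_0\circ E_0 + Ek_T\circ E_T$, where $E_0\colon x\mapsto x_0$ and $E_T\colon x\mapsto x_{T+}$ are the evaluation maps into $L^\infty(\F_0;\reals^d)$ and $L^\infty(\F_T;\reals^d)$, and $EI_h = \tilde I_h\circ\iota$ with $\iota\colon\N^\infty\hookrightarrow\LL^\infty$ the inclusion (well defined since $x\in\N^\infty$ has $\|x\|_{\LL^\infty}$ controlled by $|x_0|$ and the essentially bounded pathwise variation) and $\tilde I_h(\xi)=E\int h(\xi)\,d\mu$ the integral functional on $\LL^\infty$.

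First I would show that $\iota$, $E_0$ and $E_T$ are continuous from $\tau(\N^\infty,\V^1)$ to the respective Mackey topologies. Since a linear map between dual pairs is Mackey continuous as soon as it is weakly continuous, it suffices to exhibit adjoints landing in the relevant preduals. For $E_0$ the adjoint sends $\xi\in L^1(\F_0;\reals^d)$ to $(\xi,0)\in\V^1$, and for $E_T$ it sends $\xi\in L^1(\F_T;\reals^d)$ to the pair consisting of $\op{(\xi\one_{[0,T]})}\in\R^1$ together with the appropriate $\F_0$-component, using $\int v\,dx=\xi\cdot(x_{T+}-x_0)$ in expectation. For $\iota$, integration by parts against $Y_t:=\int_{(t,T]}y_s\,d\mu_s$ gives $E\int x_ty_t\,d\mu_t=E[x_0\cdot Y_0+\int Y\,dx]$ for $y\in\LL^1$; since $\mu$ is atomless and $y\in\LL^1$, the process $Y$ is a continuous IV process, so its optional projection is a regular quasimartingale by Corollary~\ref{cor:qmart}, and $\iota^*y$ is the corresponding element of $\V^1_{qm}\subseteq\V^1$. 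Here the optionality of $\mu$ and the projection identities for integrals against predictable finite-variation measures (Theorem~\ref{thm:qmart}) are what guarantee that the adjoints take values in $\R^1$ rather than in an abstract predual.

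Next I would invoke the classical theory of integral functionals on $L^\infty$: for a real-valued integrand that is integrable and bounded below by an integrable affine function, the associated integral functional is finite everywhere and continuous in the Mackey topology $\tau(L^\infty,L^1)$; see \cite{roc71,roc74}. The relevant local integrable domination --- for every bounded $\xi$ and every $r>0$ a $\beta\in\LL^1$ with $h(\xi+\xi')\le\beta$ for all $|\xi'|\le r$ --- is exactly the property extracted in the proof of Corollary~\ref{cor:inth} from real-valuedness and integrability of $h$, and the analogous statements hold for $k_0$ and $k_T$ over $(\Omega,\F_0,P)$ and $(\Omega,\F_T,P)$. Thus $\tilde I_h$, $Ek_0$ and $Ek_T$ are Mackey continuous on their respective spaces. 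Composing with the Mackey-continuous linear maps above and summing (Mackey continuity being preserved under finite sums) yields that $Ef$ is $\tau(\N^\infty,\V^1)$-continuous, and the compactness of the sublevel sets of $(Ef)^*$ in $\V^1$ then follows from \cite[Theorem~10b]{roc74}.

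The main obstacle is the interplay between Mackey continuity and the nonstandard duality: since $Ef$ is closed and finite on all of the Banach space $\N^\infty$ it is automatically norm continuous, and the upgrade to Mackey continuity genuinely requires the uniform-integrability content of the domination condition rather than mere finiteness. The delicate paper-specific point is therefore the second step, namely verifying that the reduction maps respect the pairing with $\V^1$ --- that integrating an $\LL^1$- or $L^1(\F_T)$-function against $d\mu$ (respectively placing it as a terminal value) produces, after optional projection, an element of $\R^1$ rather than a general bounded functional. This is precisely where the regular-quasimartingale description of Corollary~\ref{cor:qmart} and the integration-by-parts formula of Theorem~\ref{thm:qmart} do the essential work; once the adjoints are known to land in $\V^1_{qm}$, the remaining continuity assertions are routine consequences of the classical $L^\infty$ theory.
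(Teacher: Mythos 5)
Your proposal is correct and follows essentially the same route as the paper: decompose $Ef$ into the three integral functionals precomposed with linear maps, get Mackey continuity of the integral functionals on $L^\infty$-type spaces from the classical theory, and verify weak continuity of the linear maps by computing their adjoints via integration by parts (your $Y_t=\int_{(t,T]}y\,d\mu$ is exactly the paper's $z_T-z_t$, and $\op{(z_T-z)}\in\R^1$ already by the definition of regular processes, so the appeal to Corollary~\ref{cor:qmart} is not even needed). The only cosmetic difference is that the paper packages the ``weakly continuous $\Rightarrow$ Mackey continuous'' step as Lemma~\ref{lem:adj}, which you invoke implicitly.
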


\begin{proof}
By \cite[Theorem 22]{roc74}, $EI_h$ is $\tau(\LL^\infty,\LL^1)$-continuous on $\LL^\infty$ while $Ek_0$ and $Ek_T$ are $\tau(L^\infty,L^1)$-continuous on $L^\infty$. By Lemma~\ref{lem:adj}, it suffices to show that the embedding $i$ of $\N^\infty$ is weakly continuous on $\LL^\infty$ and that $x\mapsto x_0$ and $x\mapsto x_{T_+}$ are weakly continuous from $\N^\infty$ to $L^\infty$.

Let $w\in\LL^1$ and define $z\in L^1(C)$ by $z_t=\int_{[0,t]}wd\mu$. Integration by parts gives
\begin{align*}
\langle i(x),w\rangle_{\LL^\infty} &= E\int x dz\\
&= E\left[x_{T+}\cdot z_T-\int zdx\right]\\
&= E\left[x_0\cdot z_T+\int (z_T-z)dx\right]\\
&= E\left[x_0\cdot E_0z_T+\int\op(z_T-z)dx\right]\\
&=\langle x,v\rangle_{\V^1},
\end{align*}
where $v=(E_0z_T,\op(z_T-z))\in\V^1$. Thus, the adjoint $i^*:\LL^1\to\V^1$ of $i$ has full domain and is given by $i^*(w)=(E_0z_T,\op(z_T-z))$. This implies the weak continuity of both $i$ and $i^*$. The continuity of $x\mapsto x_T$ follows from
\[
E(x_T\cdot z)=E\left(x_0\cdot z+\int zdx\right)=E\left(x_0\cdot E_0z+\int\op zdx\right)
\]
and that of $x\mapsto x_0$ from $E(x_0\cdot z)= E(x_0\cdot E_0z)$.
\end{proof}

We end this section by studying asymptotic properties of integral functionals and dense subsets of $\V^1$. 

\begin{theorem}\label{thm:rec}
If $Ef$ is proper, then 
\[
(Ef)^\infty(x)=E\left[I_{h^\infty}(x)+k_0^\infty(x_0)+k_T^\infty(x_{T+})\right].
\]
\end{theorem}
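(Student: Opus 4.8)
The plan is to compute $(Ef)^\infty$ from the difference-quotient representation of the recession function and then push the resulting limits through the expectation and the time-integral by monotone convergence. Fix a base point $\bar x\in\dom Ef\cap\N^\infty$, which exists since $Ef$ is proper. For any closed proper convex $g$ and any $\bar x\in\dom g$ one has
\[
g^\infty(x)=\sup_{\lambda>0}\frac{g(\bar x+\lambda x)-g(\bar x)}{\lambda}=\lim_{\lambda\to\infty}\frac{g(\bar x+\lambda x)-g(\bar x)}{\lambda},
\]
the difference quotient being nondecreasing in $\lambda$ by convexity and the value independent of $\bar x$; closedness of $Ef$ is provided by Theorem~\ref{thm:icr}. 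Writing $q(\lambda,\omega):=\lambda^{-1}[f(\bar x+\lambda x,\omega)-f(\bar x,\omega)]$ and using that $f(\bar x,\cdot)$ is integrable, we get $(Ef)^\infty(x)=\lim_{\lambda\to\infty}E\,q(\lambda)$, so it remains to interchange this limit with $E$ and with the time-integral and to identify the pointwise limits.

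First I would produce an integrable lower bound, uniform in $\lambda\ge1$, for the difference quotients. The bound \eqref{eq:lb} gives $h(y)\ge y\cdot v-\alpha$ with $v,\alpha\in\LL^1$, whence for $\lambda\ge1$
\[
\frac{h_t(\bar x_t+\lambda x_t)-h_t(\bar x_t)}{\lambda}\ge x_t\cdot v_t+\big(\bar x_t\cdot v_t-\alpha_t-h_t(\bar x_t)\big),
\]
where the last step uses that the bracketed term is nonpositive. The right-hand side is $\eta$-integrable because $x,\bar x\in\N^\infty$ are essentially bounded, $v,\alpha\in\LL^1$, and $h(\bar x)\in\LL^1$ (as $\bar x\in\dom EI_h$ and $h$ is bounded below). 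Since $Ek_0^*$ and $Ek_T^*$ are proper, there exist $w_0,w_T\in L^1$ in their domains, giving the affine minorants $k_0(y)\ge y\cdot w_0-k_0^*(w_0)$ and $k_T(y)\ge y\cdot w_T-k_T^*(w_T)$ with integrable data, hence analogous bounds for the boundary difference quotients. As all three terms are finite at $\bar x$, the quotient splits, $q(\lambda)=q_{I_h}(\lambda)+q_{k_0}(\lambda)+q_{k_T}(\lambda)$, with no $\infty-\infty$ ambiguity.

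With these bounds in hand I would apply monotone convergence to each term. On $\Omega\times[0,T]$ with the measure $\eta$,
\[
E\,q_{I_h}(\lambda)=E\int\frac{h_t(\bar x_t+\lambda x_t)-h_t(\bar x_t)}{\lambda}\,d\mu_t\ \uparrow\ E\int h_t^\infty(x_t)\,d\mu_t=E\,I_{h^\infty}(x),
\]
using that $\bar x_t\in\dom h_t$ $\eta$-a.e., so the pointwise quotient increases to $h_t^\infty(x_t)$. The same argument on $\Omega$ gives $E\,q_{k_0}(\lambda)\uparrow E\,k_0^\infty(x_0)$ and $E\,q_{k_T}(\lambda)\uparrow E\,k_T^\infty(x_{T+})$, and summing the three limits yields the asserted identity.

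The main obstacle is the uniform integrable lower bound that legitimizes the monotone-convergence interchange; everything else follows from the monotonicity of secant slopes. A secondary point requiring care is the base-point independence of the recession formula, which I would settle by invoking the closedness of $Ef$ from Theorem~\ref{thm:icr}, together with the splitting of the recession of the sum, which is valid precisely because $\bar x$ lies in the domain of all three summands so that no $\infty-\infty$ arises.
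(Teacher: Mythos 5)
Your argument is correct, but it takes a genuinely different route from the paper. The paper's proof is a two-line reduction: it views $Ef$ as a sum of three compositions of weakly continuous linear maps (the embedding $\N^\infty\to\LL^\infty$ and the evaluations $x\mapsto x_0$, $x\mapsto x_{T+}$) with integral functionals that are lsc on the decomposable spaces $\LL^\infty$ and $L^\infty$, then cites the recession formula $(EI_h)^\infty=EI_{h^\infty}$ from \cite[Theorem~2]{pp15a} together with the general rules that recession commutes with proper sums and with composition by continuous linear maps. You instead prove the key identity from scratch: the difference-quotient representation of $(Ef)^\infty$, a uniform ($\lambda\ge1$) integrable minorant for the secant slopes extracted from \eqref{eq:lb} and from the properness of $Ek_0^*$, $Ek_T^*$, and the generalized monotone convergence theorem on $(\Omega\times[0,T],\eta)$ and on $\Omega$. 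This is essentially a direct proof of the cited recession formula for integral functionals restricted to the points you need, so your version is more self-contained and elementary, at the cost of length; the paper's version buys brevity and reusability of the decomposition already set up for Theorem~\ref{thm:mackey}. Your handling of the delicate points (the bracketed term being nonpositive so that dividing by $\lambda\ge1$ preserves the bound, the finiteness of all three terms at $\bar x$ so the quotient splits without $\infty-\infty$, and the integrability of $h(\bar x)$, $k_0(\bar x_0)$, $k_T(\bar x_{T+})$) is sound.

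One point deserves repair. To use the base-point--independent limit representation of $(Ef)^\infty$ you need $Ef$ to be closed, and you obtain this by invoking Theorem~\ref{thm:icr}, which carries the hypotheses of Theorem~\ref{thm:ifn} on $h$; Theorem~\ref{thm:rec} assumes only that $Ef$ is proper. The closedness in fact follows from the standing assumptions alone, exactly as in the paper's proof: the lower bound \eqref{eq:lb} makes $EI_h$ lsc on $\LL^\infty$, the properness of $Ek_0^*$ and $Ek_T^*$ makes $Ek_0$ and $Ek_T$ lsc on $L^\infty$, and the three linear maps are weakly continuous by the computations in the proof of Theorem~\ref{thm:mackey}, so $Ef$ is weakly lsc on $\N^\infty$. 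Substituting this argument for the appeal to Theorem~\ref{thm:icr} makes your proof valid in the full stated generality.
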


\begin{proof}
Like in the proof of Theorem~\ref{thm:mackey}, we may view $Ef$ as the sum of three functions, each one of which is the composition of a continuous linear mapping and an integral functional. The lower bound \eqref{eq:lb} implies that $EI_h$ is lsc on $\LL^\infty$, and the properness assumptions of $Ek_0^*$ and $Ek_T^*$ imply that $Ek_0$ and $Ek_T$ are lsc on $L^\infty$. It now suffices to apply the last part of \cite[Theorem~2]{pp15a} to the integral functionals and to use the general facts that the recession function of a sum/composition is the sum/composition of the recession functions whenever the sum/composition is proper.
\end{proof}

Following \cite[Section~3D]{rw98}, we say that a normal integrand $h$ is {\em coercive} if $h^\infty=\delta_{\{0\}}$.

\begin{corollary}\label{cor:ito}
If $h$, $k_0$ and $k_T$ are coercive, then $\dom(Ef)^*$ is dense in $\V^1$. In particular, $\{v\in\V^1_{qm}\,|\,Dv\ll\mu\}$ is dense in $\V^1$.
\end{corollary}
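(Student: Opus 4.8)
The plan is to derive both statements from the recession-function formula of Theorem~\ref{thm:rec} together with the classical duality between recession functions and domains of conjugates. First I would note that, under the standing assumptions, $Ef$ is closed (Theorem~\ref{thm:icr}) and proper, so Theorem~\ref{thm:rec} applies and $(Ef)^*$ is proper, i.e.\ $\dom(Ef)^*\neq\emptyset$. Coercivity of $h$, $k_0$ and $k_T$ means $h^\infty=k_0^\infty=k_T^\infty=\delta_{\{0\}}$, so Theorem~\ref{thm:rec} yields
\[
(Ef)^\infty(x)=E\left[\int\delta_{\{0\}}(x_t)\,d\mu_t+\delta_{\{0\}}(x_0)+\delta_{\{0\}}(x_{T+})\right].
\]
I would then show this equals $\delta_{\{0\}}$ on $\N^\infty$: the right-hand side is finite (hence zero) exactly when, almost surely, $x_t=0$ for $\mu$-a.e.\ $t$ together with $x_0=x_{T+}=0$; since $\mu$ has full support the zero set of $x$ is dense in $[0,T]$, and left-continuity forces $x_t=\lim_{s\uparrow t}x_s=0$ at every $t$, so that $x\equiv 0$.

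Next I would invoke the identity $(Ef)^\infty=\sigma_{\dom(Ef)^*}$, the support function of the domain of the conjugate, valid for closed proper convex functions in the pairing $\langle\N^\infty,\V^1\rangle$ because $\N^\infty$ is the Banach dual of $\V^1$ and biconjugation applies. Combined with the previous step this gives $\sigma_{\dom(Ef)^*}=\delta_{\{0\}}$, i.e.\ $\sup_{v\in\dom(Ef)^*}\langle x,v\rangle=+\infty$ for every $x\neq 0$. Density then follows by contradiction: were the convex set $\dom(Ef)^*$ not $\sigma(\V^1,\N^\infty)$-dense, Hahn--Banach separation (whose separating functionals are precisely the elements of $\N^\infty$) would produce a nonzero $x\in\N^\infty$ bounding $\langle x,\cdot\rangle$ above on $\dom(Ef)^*$, making $\sigma_{\dom(Ef)^*}(x)<\infty$, a contradiction. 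Hence $\dom(Ef)^*$ is weakly dense, and being convex, Mazur's theorem upgrades this to norm-density in the Banach space $\V^1$. This establishes the first assertion.

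For the ``in particular'' claim, which makes no reference to $f$, I would apply the first assertion to a conveniently chosen coercive triple. Take $k_0=k_T=\delta_{\{0\}}$, which are coercive and satisfy the required properness, and take $h$ real-valued, integrable and coercive, for instance $h_t(x)=c_t|x|^2$ with $c\in\LL^1$, $c>0$, chosen so that $h$ is integrable and the hypotheses of Theorem~\ref{thm:ifn} hold. Since $h$ is then real-valued and integrable, Corollary~\ref{cor:inth} gives $\dom(Ef)^*\subseteq\{v\in\V^1_{qm}\mid Dv\ll\mu\}$; this containment reflects $(h^*)^\infty=\sigma_{\dom h}=\sigma_{\reals^d}=\delta_{\{0\}}$, which renders the singular term of $J_{h^*}(-Dv)$ infinite unless $Dv$ has no singular part. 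As the first assertion shows $\dom(Ef)^*$ is dense, the larger set $\{v\in\V^1_{qm}\mid Dv\ll\mu\}$ is dense as well.

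The main obstacle I anticipate is making the recession $=$ support-function duality rigorous in this infinite-dimensional Banach-dual pairing, that is, confirming that $Ef$ is closed and proper so that $(Ef)^\infty=\sigma_{\dom(Ef)^*}$ genuinely holds; a secondary point is the careful bookkeeping showing $(Ef)^\infty=\delta_{\{0\}}$, where full support and left-continuity are used to pass from ``$\mu$-a.e.\ zero'' to ``identically zero,'' including the boundary values $x_0$ and $x_{T+}$. By comparison, the Hahn--Banach/Mazur passage from weak to norm density and the construction of an integrable coercive $h$ for the second part are routine.
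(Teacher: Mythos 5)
Your proposal is correct and follows essentially the same route as the paper: Theorem~\ref{thm:rec} plus coercivity of the integrands gives $(Ef)^\infty=\delta_{\{0\}}$, and the density of $\dom(Ef)^*$ then follows from the equivalence of coercivity with density of the conjugate's domain (the paper simply cites Rockafellar's Theorem~5.B for this, which is exactly the support-function/Hahn--Banach argument you spell out), with the last claim obtained by specializing to a concrete coercive triple (the paper takes all three quadratic, you take $k_0=k_T=\delta_{\{0\}}$ and a weighted quadratic $h$, then invoke Corollary~\ref{cor:inth}). Both choices work, and your unpacking of the full-support/left-continuity step showing $(Ef)^\infty=\delta_{\{0\}}$ is a correct elaboration of what the paper leaves implicit.
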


\begin{proof}
By Theorem~\ref{thm:rec}, $Ef$ is coercive on $\N^\infty$. By \cite[Theorem 5.B]{roc66}, this is equivalent to $\dom (Ef)^*$ being dense in $\V^1$. The last claim follows e.g.\ by taking $h$, $k_0$ and $k_T$ quadratic.
\end{proof}



The above implies, in particular, that It{\^o} processes are dense in $\R^1$. 

\begin{remark}
Our results are easily specialized to functionals of the form
\[
Ef_0(x):= E\left[I_h(x)+k_{T+}(x_{T+})\right]
\]
on the space $\N_0^\infty:=\{x\in \N^\infty\mid x_0=0\}$. Indeed, we may pair $\N_0^\infty$ with $\R^1$ via 
\[
\langle x,v\rangle=E\int vdx,
\]
the weak topologies of $\N_0^\infty$ and $\R^1$ are simply the relative topologies weak topologies when $\N_0^\infty$ and $\R^1$ are viewed as subspaces of $\N^\infty$ and $\V^1$, respectively. Setting $k_0=\delta_0$, we have
\[
(Ef_0)^*(v)=(Ef)^*(0,v).
\]
\end{remark}


\section{Appendix}

This section recalls some basic definitions and facts from convex duality and the theory of integral functionals.

\subsection{Duality}

When $U$ is in separating duality with another linear space $Y$, the {\em conjugate} of an extended real-valued convex function $g$ on $U$ is the extended real-valued function $g^*$ on $Y$ defined by
\[
g^*(y) = \sup_{u\in U}\{\langle u,y\rangle - g(u)\}.
\]
A $y\in Y$ is a {\em subgradient} of $g$ at $u$ if
\[
g(u')\ge g(u) + \langle u'-u,y\rangle\quad\forall u'\in U.
\]
The set $\partial g(u)$ of all subgradients is known as the {\em subdifferential} of $g$ at~$u$. We often use the fact $y\in\partial g(u)$ if and only if
\[
g(u)+g^*(y)=\langle u,y\rangle.
\]
The {\em recession function} of a closed proper convex function $g$ is defined by
\[
g^\infty(u)=\sup_{\alpha>0}\frac{g(\bar u+\alpha u)-g(\bar u)}{\alpha},
\]
where the supremum is independent of the choice of $\bar u\in\dom g$; see \cite[Corollary~3C]{roc66}. By \cite[Corollary 3D]{roc66}, $\delta_{\dom g^*}^*=g^\infty$.

\begin{lemma}\label{lem:adj}
If $A:X\to U$ is a weakly continuous linear mapping with respect to the pairings of $X$ with $V$ and $U$ with $Y$, then $A$ is Mackey-continuous.
\end{lemma}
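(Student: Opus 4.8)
The plan is to exploit the description of the Mackey topology in terms of support functions of weakly compact convex balanced sets and to pass through the adjoint of $A$. First I would recall that a linear map between dual pairs is weakly continuous precisely when it admits an adjoint: from the $\sigma(X,V)$-$\sigma(U,Y)$ continuity of $A$, for each fixed $y\in Y$ the functional $x\mapsto\langle Ax,y\rangle$ is $\sigma(X,V)$-continuous, hence represented by a unique element $A^*y\in V$ satisfying $\langle Ax,y\rangle=\langle x,A^*y\rangle$ for all $x\in X$. Linearity of $y\mapsto A^*y$ is immediate, so $A$ furnishes an adjoint $A^*:Y\to V$.

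The next step is to verify that $A^*$ is itself weakly continuous, now with respect to the pairings $(Y,U)$ and $(V,X)$. For fixed $x\in X$, the functional $y\mapsto\langle A^*y,x\rangle=\langle y,Ax\rangle$ is $\sigma(Y,U)$-continuous because $Ax\in U$; by the same representability criterion used above, this yields $\sigma(Y,U)$-$\sigma(V,X)$ continuity of $A^*$. Consequently $A^*$ carries $\sigma(Y,U)$-compact sets to $\sigma(V,X)$-compact sets, and it preserves convexity and balancedness by linearity.

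Finally I would assemble these facts using the Mackey seminorms. The topology $\tau(U,Y)$ is generated by the seminorms $q_L(u):=\sup_{y\in L}|\langle u,y\rangle|$ as $L$ ranges over the $\sigma(Y,U)$-compact convex balanced subsets of $Y$, and $\tau(X,V)$ is generated analogously by $p_K(x):=\sup_{v\in K}|\langle x,v\rangle|$ over $\sigma(V,X)$-compact convex balanced $K\subseteq V$. For any such $L$, the adjoint relation gives
\[
q_L(Ax)=\sup_{y\in L}|\langle Ax,y\rangle|=\sup_{y\in L}|\langle x,A^*y\rangle|=\sup_{v\in A^*(L)}|\langle x,v\rangle|=p_{A^*(L)}(x),
\]
and by the previous paragraph $A^*(L)$ is a legitimate $\sigma(V,X)$-compact convex balanced set, so $p_{A^*(L)}$ is a genuine generating seminorm for $\tau(X,V)$. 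Thus the pullback under $A$ of every generating seminorm for $\tau(U,Y)$ is continuous on $(X,\tau(X,V))$, which is exactly Mackey continuity of $A$. The only substantive point is the preservation of weak compactness under $A^*$; once the adjoint is shown to be weakly continuous this is automatic, and the remainder is the standard identification of Mackey neighborhoods with polars of weakly compact sets.
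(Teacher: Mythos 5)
Your proposal is correct and follows essentially the same route as the paper: both arguments hinge on the existence and weak continuity of the adjoint $A^*$, the preservation of weak compactness under $A^*$, and the identification of Mackey neighborhoods with (sublevel sets of) support functions of weakly compact subsets of the dual. You spell out the existence and weak continuity of $A^*$ in more detail than the paper does, but the underlying mechanism is identical.
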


\begin{proof}
If $O\subset\tau(U,Y)$ is a neighborhood of the origin, there exists a weakly compact $D\subset Y$ with $\lev_1\sigma_D\subset O$ so
\[
A^{-1}(O)\supset\{x\in\N^\infty\,|\,\sigma_D(Ax)\le 1\}=\{x\in\N^\infty\,|\,\sigma_{A^*D}(x)\le 1\}.
\]
Weak continuity of $A$ implies that it has a weakly continuous adjoint $A^*$, so $A^*D\subset V$ is weakly compact. 

\end{proof}

\subsection{Integral functionals}\label{app:if}

Given a measurable space $(\Xi,\A)$ and a locally convex topological vector space $U$, a set-valued mapping $S:\Xi\tos U$ is {\em measurable} if the inverse image $S^{-1}(O):=\{\xi\in\Xi\,|\,S(\xi)\cap O\ne\emptyset\}$ of every open $O\subseteq S$ is in $\A$. An extended real-valued function $f:U\times\Xi\to\ereals$ is said to be a {\em normal $\A$-integrand} on $\reals^d$ if the {\em epi-graphical mapping}
\[
\xi\mapsto\epi f(\cdot,\xi)=\{(u,\alpha)\in U\times\reals|\, f(u,\xi)\leq\alpha\}
\]
is closed-valued and measurable. A normal integrand $f$ is said to be {\em convex} if $f(\cdot,\xi)$ is a convex function for every $\xi\in\Xi$. When $U$ is a Suslin space as well as a countable union of Borel sets that are Polish in the relative topology, a normal integrand is $\B(U)\otimes\A$-measurable (see \cite{pp15a}), so $\xi\mapsto f(u(\xi),\xi)$ is $\A$-measurable whenever $u:\Xi\to U$ is $\A$-measurable. Given a nonnegative measure $\eta$ on $(\Xi,\A)$, the measurability implies that the associated {\em integral functional}
\[
I_f(u):=\int_\Xi f(u(\xi),\xi)d\eta(\xi)
\]
is a well-defined extended real-valued function on the space $L^0(\Xi,\A,\eta;U)$ of equivalence classes of $U$-valued $\A$-measurable functions. Here and in what follows, we define the integral of a measurable function as $+\infty$ unless the positive part of the function is integrable. The function $I_f$ is called the {\em integral functional} associated with the normal integrand $f$. If $f$ is a convex normal integrand, $I_f$ is convex on $L^0(\Xi,\A,\eta;U)$.

\begin{lemma}[Jensen's inequality]\label{lem:jin}
Assume that $h$ is an optional convex normal integrand, $\mu$ is an optional random measure and that
\[
h(x)\ge x\cdot v-\alpha
\] 
for some optional $v$ and nonnegative $\alpha$ such that $\int |v|d\mu$ and $\int\alpha d\mu$ are integrable. Then
\[
EI_h(x)\ge EI_h(\op x)
\]
for every bounded process $x$. If $h$, $\mu$ and $v$ are predictable, then
\[
EI_h(x)\ge EI_h(\pp x)
\]
for every bounded process $x$.
\end{lemma}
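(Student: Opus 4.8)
The plan is to establish the inequality through the Fenchel (Young) inequality for the conjugate integrand $h^*$, exploiting the fact that, against the optional measure $\mu$, the optional projection behaves like a conditional expectation. Throughout I may assume $EI_h(x)<\infty$, since otherwise the claim is trivial. The lower bound $h(x)\ge x\cdot v-\alpha$, together with the boundedness of $x$ (hence of $\op x$, which inherits the same bound) and the integrability of $\int|v|\,d\mu$ and $\int\alpha\,d\mu$, guarantees that both $EI_h(x)$ and $EI_h(\op x)$ are well defined in $(-\infty,+\infty]$.

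The key algebraic mechanism is the following: for any bounded optional process $w$ one has $\op{(x\cdot w)}=w\cdot\op x$, and since $\mu$ is optional, $E\int x\cdot w\,d\mu=E\int\op{(x\cdot w)}\,d\mu=E\int \op x\cdot w\,d\mu$. Thus pairing $x$ against an optional weight is insensitive to replacing $x$ by $\op x$. Now $h$ is a proper lsc convex normal integrand, so $h=h^{**}$ and $h^*$ is again a normal integrand; the Fenchel inequality gives pointwise $h_t(x_t)\ge x_t\cdot w_t-h^*_t(w_t)$ for every measurable selection $w$ of $\dom h^*$. Integrating and applying the mechanism above,
\[
E\int h(x)\,d\mu\ \ge\ E\int\bigl[\op x\cdot w-h^*(w)\bigr]\,d\mu .
\]

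It then remains to choose $w$ so that the right-hand side approximates $E\int h(\op x)\,d\mu$. Because $h_t(\op x_t)=\sup_{w}\{\op x_t\cdot w-h^*_t(w)\}$, for fixed $M$ and $\epsilon>0$ the set-valued map $(\omega,t)\mapsto\{w\mid \op x_t\cdot w-h^*_t(w)\ge\min\{M,h_t(\op x_t)\}-\epsilon\}$ is measurable with nonempty values $\eta$-a.e., and it admits an optional selection $w^{M,\epsilon}$ by the measurable selection theorem. Feeding this selection into the displayed inequality and letting first $\epsilon\downarrow 0$ and then $M\uparrow\infty$, monotone convergence (with the integrable affine minorant controlling the negative part) yields $E\int h(x)\,d\mu\ge E\int h(\op x)\,d\mu$. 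The predictable statement is then proved verbatim, replacing $\op\cdot$ by $\pp\cdot$ and using that for predictable $h$, $\mu$ and $w$ one has $\pp{(x\cdot w)}=w\cdot\pp x$ together with the defining identity $E\int x\,d\mu=E\int\pp x\,d\mu$ of the predictable projection against the predictable measure $\mu$.

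The hard part will be the integrability bookkeeping around the selection $w^{M,\epsilon}$. The optionality identity is available only for bounded integrands, whereas the near-optimal $w^{M,\epsilon}$ need not be bounded, so one must first truncate it to $\uball_N$, verify that both the identity and the displayed inequality survive the truncation, and only then pass to the limit in $N$, $\epsilon$ and $M$. Controlling these nested limits—keeping the positive part bounded by $E\int h(x)\,d\mu<\infty$ and the negative part bounded by the integrable minorant coming from $x\cdot v-\alpha$—is the delicate point; everything else is routine. Note that this truncated-conjugate device handles uniformly the awkward case where $\op x_t$ lands on the relative boundary of $\dom h_t$ with $h_t(\op x_t)=+\infty$, since there the supremum is $+\infty$ and the inequality simply forces the left-hand side to be infinite as well.
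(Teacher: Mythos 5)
Your strategy is in essence the paper's: represent $I_h(\op x)$ through the conjugate $h^*$ tested against optional functions, use the optionality of $\mu$ to trade $\op x$ for $x$ in the pairing, and conclude by Fenchel's inequality. The paper packages your ``near-optimal optional selection'' step as the interchange rule for integration and minimization (\cite[Theorem~14.60]{rw98}), applied once on $(\Omega\times[0,T],\O,\hat\eta)$ and once on $(\Omega\times[0,T],\F\otimes\B([0,T]),\hat\eta)$; this yields $E\int\hat h(\op x)\,d\hat\mu=\sup_{v}E\int[\op x\cdot v-\hat h^*(v)]\,d\hat\mu$ over optional $v\in\LL^1$ in one stroke and disposes of the truncation and nested-limit bookkeeping that you defer to ``the hard part.''

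The concrete gap is the missing renormalization. You work directly with $\mu$ and $\eta$, but $E\int d\mu$ need not be finite, so a bounded optional $w$ need not be $\eta$-integrable: the swap $E\int x\cdot w\,d\mu=E\int\op x\cdot w\,d\mu$ is then not an identity between well-defined integrals, and the $\epsilon$ you subtract pointwise in the selection step integrates to $\epsilon\cdot\eta(\Omega\times[0,T])$, which may be infinite. The paper's proof opens by passing to $\hat\mu$ with $d\hat\mu/d\mu=\beta:=\op(1/(1+\int d\mu))$ and to $\hat h=h/\beta$, $\hat h^*(v)=h^*(\beta v)/\beta$; this makes $\hat\eta$ a bounded optional measure, preserves the lower bound so that $I_{\hat h^*}$ is proper on the optional $\LL^1(\hat\eta)$ (without which the biconjugate representation you rely on is not available), and is exactly what lets your truncation-and-limit scheme close. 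With that device added---and with the near-optimal selection replaced on the truncated-away set by the integrable point $v$ from the lower bound, so that $\hat h^*$ remains controlled there---your argument becomes a correct, hands-on rendition of the paper's proof rather than a genuinely different one.
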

\begin{proof}
We define $\hat\mu\ll \mu$ by $d\hat\mu/d\mu=\beta:=\op(1/(1+\int d\mu))$ so that $\hat\mu$ defines an optional bounded measure $\hat \eta(A)=E\int\one_A d\hat\mu$ on $\Omega\times[0,T]$. Moreover, $EI_h(x)=E\int \hat h(x)d\hat\mu$, where $\hat h(x)=h(x)/\beta$ is an optional convex normal integrand. We have
\[
\hat h^*(v)=h^*(\beta v)/\beta,
\]
so the lower bound implies that $E\int \hat h^*(v/\beta)d\hat\mu$ is finite. Thus we may apply the interchange of integration and minimization on $(\Omega\times[0,T],\O,\hat\eta)$ and on $(\Omega\times[0,T],\F\otimes\B([0,T]),\hat\eta)$ (see \cite[Theorem 14.60]{rw98}) to get
\begin{align*}
EI_h(\op x) &=E\int \hat h(\op x)d\hat\mu\\
 &=\sup_{v\in \LL^1(\Omega\times[0,T],\O,\hat\eta)}E\int[\op x\cdot v-\hat h^*(v)]d\hat\mu\\
&=\sup_{v\in \LL^1(\Omega\times[0,T],\O,\hat\eta)}E\int[x\cdot v-\hat h^*(v)]d\hat\mu\\
&\le \sup_{v\in \LL^1(\Omega\times[0,T],\F\otimes\B([0,T]),\hat\eta)}E\int[x\cdot v-\hat h^*(v)]d\hat\mu\\
&=E\int \hat h(x)d\hat\mu\\
&=EI_h(x).
\end{align*}
The predictable case is proved similarly.
\end{proof}

\subsection{Proof of Lemma~\ref{lem:mif}}
By \cite[Lemma~22]{pp15a}, it suffices to show that $I_h(\cdot,\omega)$ is lsc almost surely and that $I_h$ is $\F\otimes\B(X)$-measurable.

To show that $I_h(\cdot,\omega)$ is lsc almost surely, we denote 
\[
L_\omega^\infty :=L^\infty([0,T],\B([0,T]),\mu(\omega);\reals^d).
\]
Since $\mu(\omega)$ is atomless, the embedding of $(X,\sigma(X,V))$ to $(L_\omega^\infty,\sigma(L_\omega^\infty,L_\omega^1)$ is continuous (see the proof of \cite[Theorem~2.1]{pp14}) while the lower bound implies that $I_h(\cdot,\omega)$ is $\sigma(L_\omega^\infty,L_\omega^1)$-lsc, by \cite[Theorem~3C]{roc76}.

To prove the measurability, let $S$ be the space of l\'adc\'ag functions (left-continuous with right limits) from $[0,T]$ to $\reals^d$. Equipped with the Skorokhod topology (with obvious changes of signs since we deal with left-continuous instead of right continuous functions) $S$ is a Polish space; see \cite[Theorem 15.17]{hwy92}. Since every sequence converging in $S$ converges pointwise outside a countable set\footnote{A l\'adc\'ag function has at most a countable set of discontinuities, so this fact follows from the remark on page 452 in \cite{hwy92}}, $S$ satisfies the assumptions of \cite[Theorem~25]{pp15a}, so $I_h$ is $\B(S)\otimes\F$-measurable. It thus suffices to show that the injection from $X$ to $S$ is measurable.

By \cite[Theorem 3]{pes95}, $\B(S)$ coincides with the Borel-$\sigma$-algebra generated by the topology that $S$ has when equipped with the supremum norm. By \cite[Theorem VII.65]{dm82}, continuous linear functionals in the weak topology are of the form
\[
l(u) := \int u_tda_t+\int u_{t+}da^+_t,
\]
where $a$ and $a^+$ are functions of bounded variation. When $u\in X$, integration by parts gives
\[
l(u)= u_{T+}(a_{T+}+a^+_{T+})-u_0(a_{0}+a^+_{0})-\int a_t du_t -\int a_{t-}^+ du_t.
\]
Since every function of bounded variation is a pointwise limit of a sequence of continuous functions, it is not difficult to verify that $l$ is measurable in $\B(X)$.\qed

\bibliographystyle{alpha}
\bibliography{sp}

\begin{thebibliography}{HWY92}

\bibitem[Bis73]{bis73b}
Jean-Michel Bismut.
\newblock Conjugate convex functions in optimal stochastic control.
\newblock {\em J. Math. Anal. Appl.}, 44:384--404, 1973.

\bibitem[Bis78]{bis78}
J.-M. Bismut.
\newblock R\'egularit\'e et continuit\'e des processus.
\newblock {\em Z. Wahrsch. Verw. Gebiete}, 44(3):261--268, 1978.

\bibitem[BV88]{bv88}
G.~Bouchitt{\'e} and M.~Valadier.
\newblock Integral representation of convex functionals on a space of measures.
\newblock {\em J. Funct. Anal.}, 80(2):398--420, 1988.

\bibitem[DM82]{dm82}
C.~Dellacherie and P.-A. Meyer.
\newblock {\em Probabilities and potential. {B}}, volume~72 of {\em
  North-Holland Mathematics Studies}.
\newblock North-Holland Publishing Co., Amsterdam, 1982.
\newblock Theory of martingales, Translated from the French by J. P. Wilson.

\bibitem[Fol99]{fol99}
G.~B. Folland.
\newblock {\em Real analysis}.
\newblock Pure and Applied Mathematics. John Wiley \& Sons Inc., New York,
  second edition, 1999.
\newblock Modern techniques and their applications, A Wiley-Interscience
  Publication.

\bibitem[HWY92]{hwy92}
S.~W. He, J.~G. Wang, and J.~A. Yan.
\newblock {\em Semimartingale theory and stochastic calculus}.
\newblock Kexue Chubanshe (Science Press), Beijing; CRC Press, Boca Raton, FL,
  1992.

\bibitem[KP15]{kp15}
M.~Kiiski and A.-P. Perkki{\"o}.
\newblock Optional and predictable projections of normal integrands and
  convex-valued processes.
\newblock {\em arXiv}, 2015.

\bibitem[Per14]{per14}
A.-P. Perkki{\"o}.
\newblock Continuous essential selections and integral functionals.
\newblock {\em Set-Valued Var. Anal.}, 22(1):45--58, 2014.

\bibitem[Pes95]{pes95}
W.~R. Pestman.
\newblock Measurability of linear operators in the {S}korokhod topology.
\newblock {\em Bull. Belg. Math. Soc. Simon Stevin}, 2(4):381--388, 1995.

\bibitem[PP14]{pp14}
T.~Pennanen and A.-P. Perkki{\"o}.
\newblock Duality in convex problems of {B}olza over functions of bounded
  variation.
\newblock {\em SIAM J. Control Optim.}, 52(3):1481--1498, 2014.

\bibitem[PP16]{pp15a}
T.~Pennanen and A.-P. Perkki\"o.
\newblock Integral functionals of regular processes.
\newblock {\em manuscript}, 2016.

\bibitem[Roc66]{roc66}
R.~T. Rockafellar.
\newblock Level sets and continuity of conjugate convex functions.
\newblock {\em Trans. Amer. Math. Soc.}, 123:46--63, 1966.

\bibitem[Roc68]{roc68}
R.~T. Rockafellar.
\newblock Integrals which are convex functionals.
\newblock {\em Pacific J. Math.}, 24:525--539, 1968.

\bibitem[Roc71]{roc71}
R.~T. Rockafellar.
\newblock Integrals which are convex functionals. {II}.
\newblock {\em Pacific J. Math.}, 39:439--469, 1971.

\bibitem[Roc74]{roc74}
R.~T. Rockafellar.
\newblock {\em Conjugate duality and optimization}.
\newblock Society for Industrial and Applied Mathematics, Philadelphia, Pa.,
  1974.

\bibitem[Roc76]{roc76}
R.~T. Rockafellar.
\newblock Integral functionals, normal integrands and measurable selections.
\newblock In {\em Nonlinear operators and the calculus of variations (Summer
  School, Univ. Libre Bruxelles, Brussels, 1975)}, pages 157--207. Lecture
  Notes in Math., Vol. 543. Springer, Berlin, 1976.

\bibitem[RW98]{rw98}
R.~T. Rockafellar and R.~J.-B. Wets.
\newblock {\em Variational analysis}, volume 317 of {\em Grundlehren der
  Mathematischen Wissenschaften [Fundamental Principles of Mathematical
  Sciences]}.
\newblock Springer-Verlag, Berlin, 1998.

\bibitem[Val75]{val75}
M.~Valadier.
\newblock Convex integrands on {S}ouslin locally convex spaces.
\newblock {\em Pacific J. Math.}, 59(1):267--276, 1975.

\end{thebibliography}

\end{document}